\newtheorem {thm}{Theorem}[section]
\newtheorem {prop}[thm]{Proposition} 
\newtheorem {lem}[thm]{Lemma}
\newtheorem {cor}[thm]{Corollary}
\newtheorem {defn}[thm]{Definition}
\def\N{{\Bbb N}}
\def\Z{{\Bbb Z}}
\def\R{{\Bbb R}}
\def\one{\mathbbmss{1}}
\def\P{{\Bbb P}}
\def\e{{\varepsilon}}
\def\D{\Delta}
\def\a{\alpha}
\def\sm{\setminus}
\def\EE{{\cal E}}
\def\d{\delta}
\def\e{\varepsilon}
\def\phi{\varphi}
\def\g{\gamma}
\def\l{\lambda}
\def\k{\kappa}
\def\r{\rho}
\def\s{\sigma}
\def\t{\tau}
\def\o{\omega}
\def\D{\Delta}
\def\L{\Lambda}
\def\O{{\Omega}}
\def\Of{{\Omega_{\rm f}}}
\def\P{{\Phi}}
\def\T{\T}
\def\es{{\emptyset}}
\def\FF{{\mathcal F}}
\def\GG{{\mathcal{G}}}
\def\P{{\mathcal P}}
\def\FF{\boldsymbol{{\mathcal F}}}
\def\Ff{{\mathcal F}_{\rm f}}
\def\EE{{\mathcal E}}
\def\CC{{\mathcal C}}
\def\oo{\boldsymbol{\omega}}
\def\OO{\boldsymbol{\Omega}}
\def\xx{\boldsymbol{x}}
\def\yy{\boldsymbol{y}}
\def\PP{\boldsymbol{P}}
\def\V|{{\Vert}}
\keywords{Gibbsian point processes, Kozlov theorem, Sullivan theorem, hyperedge potentials, Widom-Rowlinson model}
\subjclass[2010]{Primary 82B21; secondary 60K35}
\begin{document}
\author{Benedikt Jahnel}
\address[Benedikt Jahnel]{Weierstrass Institute Berlin, Mohrenstr. 39, 10117 Berlin, Germany, \texttt{https://www.wias-berlin.de/people/jahnel/}}
\email{Benedikt.Jahnel@wias-berlin.de}

\author{Christof K\"ulske}
\address[Christof K\"ulske]{Ruhr-Universit\"at   Bochum, Fakult\"at f\"ur Mathematik, D44801 Bochum, Germany, \texttt{http://www.ruhr-uni-bochum.de/ffm/Lehrstuehle/Kuelske/kuelske.html}}
\email{Christof.Kuelske@ruhr-uni-bochum.de}

\title{Gibbsian representation for point processes via hyperedge potentials}

\date{\today}

\maketitle

\begin{abstract}
We consider marked point processes on the $d$-dimensional euclidean space, defined in terms of a quasilocal specification based on marked Poisson point processes. We investigate the possibility of constructing absolutely-summable Hamiltonians in terms of hyperedge potentials in the sense of Georgii et al~\cite{DeDrGe12}. These potentials are a natural generalization of physical multi-body potentials which are useful in models of stochastic geometry. 

We prove that such representations can be achieved, under appropriate locality conditions of the specification. As an illustration we also provide such potential representations for the Widom-Rowlinson model under independent spin-flip time-evolution.  

Our paper draws a link between the abstract theory of point processes in infinite volume, the study of measures under transformations, and statistical mechanics of systems of point particles.
\end{abstract}

\section{Introduction}
In this note we study models for not necessarily translation-invariant Poisson point processes (PPP) in euclidean space $\R^d$ with general marks. 
Such models are the subject in the infinite-volume statistical mechanics of classical point particles 
which interact via potentials. They are already very interesting when there are no marks (or internal states 
of particles), and only 
the positions of the colorless point particles are relevant. 
Potentials coming from physics are often pair potentials. Take as an example the 
famous Lennard-Jones potential. For results on existence of such models in the infinite volume, see \cite{Ru99,Ru70}.
Also more general potentials than pair potentials appear, 
describing interactions between finite collections of particles. 
These are quite relevant in physics as well,  
see for instance the proof of a phase transition for a long (but finite) range potential 
involving 4-body interactions in \cite{LeMaPr99}. 
For models from statistical physics with marks, see e.g.~the Potts gas in \cite{GeHa96}. 
The famous Widom-Rowlinson model (WRM) is a specific example for this 
which is proved to have a phase transition in the infinite volume \cite{Ru71,ChChKo95,GiLeMa95}.  

PPPs also have an interest which is independent from the physical motivation in statistical mechanics
in models of stochastic geometry e.g.~\cite{HiJaPaKe16,HiJaPa16}. 
In the development of the fundamentals of an
infinite-volume theory (existence, uniqueness, variational principle, $\dots$) also for such systems
an important step was made by \cite{DeDrGe12} in the introduction of the more general notion of a hyperedge potential, see \cite{DeDrGe12}. For such potentials 
one allows the energetic contribution of a finite subset of particles (hyperedges) to depend 
also on the other points in the cloud, but only up to a finite horizon. This relaxation of the strict 
locality requirement on the level of potentials incorporates many models from stochastic geometry. 
In this note we are aiming for absolutely-summable representations of abstractly given point processes as Gibbs fields in terms of such hyperedge potentials.  

To compare, let us recall the simpler situation in statistical mechanics on the lattice $\Z^d$, or more generally countable index sets, where
 the notion of a quasilocal specification is fundamental for the development of 
Gibbsian theory in its purest form, see \cite{Ge11}. 
An absolutely-summable potential defines 
a finite-volume Hamiltonian $H_{\L}$ in a finite volume $ \L \subset \Z^d$,  
which depends in a quasilocal way on the boundary condition outside of $\L$.

On the lattice, going from nice potentials to Gibbsian specifications in lattice statistical mechanics is straightforward, while 
the opposite is more difficult. However, Kozlov and Sullivan \cite{Ko74,Ko76,Su73} showed 
how one may construct potentials with various convergence properties. 
For systems of point particles already going from Hamiltonians to measures is more delicate,  
for the opposite direction partial results were obtained in \cite{Ko76} where a convergent 
 representation in terms of the (necessarily unique) vacuum potential was obtained, 
while uniform absolute convergence could not be provided. It is a main aim of our paper 
to show how uniform absolute convergence can indeed be achieved in the class of Georgii's hyperedge potentials. 

\medskip

The paper is organized as follows. Section~\ref{Gibbs point processes} contains the setup of Gibbs Point Processes. In Section~\ref{Sec3} we discuss the notion of hyperedge potentials in the sense of~\cite{DeDrGe12}, and formulate as our main general results Theorem~\ref{Representation_Absolute} and Theorem~\ref{Representation_Absolute_2}, on the absolute summability of a hyperedge potential. Before doing so, we put in place Theorem~\ref{Representation}, on which we will build up later, and Corollary~\ref{Representation_Range}. These concern the convergence of 
the vacuum potential, and its finite-range property under the assumption of the strict Markov property of the specification. Versions of the results~\ref{Representation} and~\ref{Representation_Range} were obtained for the first time in~\cite{Ko76}. In Section~\ref{Sec_WRM} we discuss the two-color WRM under independent spin-flip dynamics, see~\cite{JaKu16}. This model shows quite interesting Gibbs non-Gibbs transitions, depending on activities and time, where also full measure sets of bad point appear (see for example also the initial paper~\cite{EnFeHoRe02} on Gibbs non-Gibbs transitions for the lattice Ising model). We explain that in the Gibbsian regimes there is always a hyperedge potential which even has a uniform horizon, and which converges absolutely and uniformly. Finally, the proofs including further comments are provided in Section~\ref{Proofs}.

\subsection{Acknowledgement}
This work is dedicated to the memory of Professor Hans-Otto Georgii. 
Benedikt Jahnel thanks the Leibniz program 'Probabilistic methods for mobile ad-hoc networks' for the support.
Christof K\"ulske thanks the Weierstrass Institute for its hospitality.

\section{Gibbs point processes}\label{Gibbs point processes}
\subsection{Setup} We consider the euclidean space $\R^d$ with $d\ge1$ equipped with its Borel-$\s$-algebra. Let $\O$ denote the set of all \textit{locally finite subsets} of $\R^d$, that is, for $\o\in\O$ we have $|\o_\L|=\#\{\o\cap\L\}<\infty$ for all bounded sets $\L\subset\R^d$. 
The polish space $E$ equipped with its Borel-$\s$-algebra $\mathfrak{E}$ will play the role of a \textit{local state space} or in the language of point processes the \textit{mark space}. We write $\s_\o\in E^{\o}$ for the marks of a configuration $\o\in\O$. The \textit{marked configurations} $\oo=(\o,\s_\o)$ are locally finite subsets of $\R^d\times E$ and we
denote $\OO$ the set of all such marked configurations with $\o\in\O$. Conversely we call $\o\in\O$ the \textit{grey configuration} of $\oo\in\OO$. 
We equip $\OO$ with the $\s$-algebra $\FF$ which is generated by the counting variables $\OO\ni \oo\mapsto|\oo\cap(\L\times B)|$ for bounded and measurable $\L\Subset \R^d$ and $B\in\mathfrak{E}$, i.e.~$\FF=\s\big(\{\oo:\, |\o_\L|=n, \s_{\o_\L}\in B\}:\, n\in\N,\L\Subset\R^d, B\in\mathfrak{E}^n\big)$.
Further we denote by $\OO_\L$ the set of all marked configurations in the measurable set $\L\subset\R^d$ and equip it with the corresponding trace $\s$-algebra $\FF_\L$ of $\FF$ on $\OO_\L$. We write $f\in\FF_\L$ if $f$ is measurable w.r.t.~$\FF_\L$ and $f\in\FF^b_\L$ if $f$ is additionally bounded in the supremum norm $\Vert\cdot\Vert$. 

\subsection{Gibbs point processes for Poisson modifications} In this section we setup Gibbsian point processes via Poisson specifications along the lines of $\l$-specifications for models on fixed geometries as in \cite[Chapter 1]{Ge11}. 
For any $\L\subset\R^d$ and $\oo\in\OO$ we use the short-hand notation $\oo_\L$ to indicate that points in $\L^c=\R^d\sm\L$ are eliminated. With $\oo_\L\oo_\D$ we indicate the configuration which consists of the union of $\oo_\L$ and $\oo_\D$ and similar for grey configurations. Let us start by adapting the notion of pre-modifications from \cite[Definition 1.31]{Ge11} to the continuum setting. 
\begin{defn}[Pre-modification]
Let $h=(h_\L)_{\L\Subset\R^d}$ be a family of measurable functions $h_\L: \OO^*\to [0,\infty)$ with common domain $\OO^*\subset\OO$. Then $h$ is called a {\em $\, \OO^*$-pre-modification} if for all measurable $\L\subset\D\Subset\R^d$ and $\oo,\oo'\in \OO^*$,
$$h_\D(\oo_\L\oo_{\L^c})h_\L(\oo'_\L\oo_{\L^c})=h_\L(\oo_\L\oo_{\L^c})h_\D(\oo'_\L\oo_{\L^c}).$$
\end{defn}
As the prime example of a pre-modification consider the \textit{Boltzmann weight}
$$h_\L=e^{-H_\L}$$
where the \textit{Hamiltonian} $H_\L$ is given by 
\begin{equation}\label{Ham}
\begin{split}
H_\L(\oo)=\sum_{\eta\Subset\o:\, \eta\cap\L\neq\emptyset}\Phi(\eta,\oo).
\end{split}
\end{equation}
Here the \textit{potentials} $\Phi({\eta},\cdot):\OO^*\to(-\infty,\infty]$ are measurable functions w.r.t.~$\FF_{\eta}$, 
governing the interaction of marked particles at locations $\eta$. 
For example consider the Potts Gas \cite{GeHa96} with
\begin{equation}\label{Potential}
\Phi({\eta},\oo)=\d_{\eta=\{x,y\}}[\d_{\s_x\neq\s_y}\phi(x-y)+\psi(x-y)]
\end{equation}
for some measurable and even functions $\phi,\psi:\,\R^d\to]-\infty,\infty]$ which includes also the case of the Widom-Rowlinson model \cite{ChChKo95,WiRo70}.

The introduction of the domain $\OO^*$ of \textit{admissible boundary conditions} is necessary since in the continuum setting, due to the possible accumulation of points, even in simple models with infinite-range interactions, Hamiltonians and hence pre-modifications might not be well defined everywhere. More precisely, the sum in \eqref{Ham} is only well-defined for boundary conditions $\oo\in\OO^*_\L\subset\OO$ such that 
$$\sum_{\eta\Subset\o:\, \eta\cap\L\neq\emptyset}(-\Phi(\eta,\oo)\vee 0)<\infty.$$
Note that $\OO^*_\D\subset\OO^*_\L$ for $\D\supset\L$ since
\begin{equation*}
\sum_{\eta\Subset\o:\, \eta\cap\D\neq\emptyset}(-\Phi(\eta,\oo)\vee 0)\ge \sum_{\eta\Subset\o:\, \eta\cap\L\neq\emptyset}(-\Phi(\eta,\oo)\vee 0)
\end{equation*}
and hence it suffices to consider the common domain $\OO^*=\lim_{n\uparrow\infty}\OO^*_{\L_n}$ where $\L_n=[n/2,n/2]^d$ denotes the centered box of length $n\in\N$. We will give a proper and more general definition of potentials in Section~\ref{Sec3}.

The notion of a pre-modification can be used to describe a large class of \textit{specifications}. 
\begin{defn}[Specification]
A {\em $\,\OO^*$-specification} is a family of proper probability kernels $\g=(\g_\L)_{\L\Subset\R^d}$ where each $\g_\L(\cdot|\oo)$ is defined for all $\oo\in\OO^*$ with  $\g_\L(\OO^*|\oo)=1$ and additionally satisfies the following consistency condition. For all measurable $\L\subset\D\Subset\R^d$ and $\oo\in\OO^*$
$$\g_\D(\g_\L(d \oo'|\cdot)|\oo)=\g_\D(d \oo'|\oo).$$
\end{defn}

Let us denote by $\PP$
the (maybe non-stationary) \textit{marked Poisson point process} (PPP) on $(\OO,\FF)$ with \textit{intensity measure} $\mu(dx,du)=\nu(d x)F(d u|x)$. Here $\nu$ is a $\s$-finite measure on $\R^d$ which is equivalent to the Lebesgue measure on $\R^d$
and $F$ is a kernel from $\R^d$ to the set of $\s$-finite measures on $(E,\mathfrak{E})$.
By $\PP_\L$ we denote the restriction of $\PP$ to $\OO_\L$. For $f\in\FF^b$ we will often use the short hand notation $\int\PP(d\oo)f(\oo)=\PP f$.
For a given family of density functions $\r=(\r_\L)_{\L\Subset\R^d}$, defined on a set of Poisson measure one, probability kernels can be defined via
\begin{equation}\label{PoissonMod}
\begin{split}
\g^\r_\L(f|\oo_{\L^c})=\int\PP_\L(d\oo_{\L}) f( \oo_{\L} \oo_{\L^c}   ) \r_{\L}(\oo_{\L} \oo_{\L^c}).
\end{split}
\end{equation}

The following definition labels $\r$ a \textit{Poisson modification} if the associated $\g$ is a specification, similar to \cite[Definition 1.27]{Ge11}.

\begin{defn}[Poisson-modification]
Let $\r=(\r_\L)_{\L\Subset\R^d}$ be a family of measurable functions $\r_\L: \OO^*\to [0,\infty)$ with common domain $\OO^*\subset\OO$. Then, $\r$ is called a {\em $\, \OO^*$-Poisson modification} if the family of probability kernels $\g^\r=(\g^\r_\L)_{\L\Subset\R^d}$ given by \eqref{PoissonMod} is a $\, \OO^*$-specification. A $\, \OO^*$-\textit{Poisson modification} is called {\em positive} if for all $\oo\in\OO^*$ and $\L\Subset\R^d$ we have $\r_\L(\oo)>0$, it is called {\em vacuum positive} if only $\r_\L(\oo_{\L^c})>0$ holds.
\end{defn}
Note that under the PPP, the empty set in finite volumes with positive Lebesgue measure has positive mass. Hence, for all $\oo\in\OO^*$ also 
$\oo_{\L^c}\in\OO^*$ for all $\L\subset\R^d$. 
As an example note that the Poisson modification of the WRM is not positive but vacuum positive. 
For a $\, \OO^*$-pre-modification $h$ the normalization $Z_\L(\oo_{\L^c})=\int\PP_\L(d\oo_\L)h_\L(\oo_\L\oo_{\L^c})$ is referred to as the \textit{partition function}.
The conditions on pre-modifications give rise to Poisson modifications. This is the content of the following lemma.

\begin{lem}\label{PM}
Let $h$ be a $\, \OO^*$-\textit{pre-modification} with
$$0<Z_\L(\oo_{\L^c})<\infty$$
 for all $\L\Subset\R^d$ and $\oo_{\L^c}\in \OO^*$. Then $\r=(h_\L/Z_\L)_{\L\Subset\R^d}$ is a $\, \OO^*$-Poisson-modification if additionally for all $\D\Subset\R^d$ and $\oo\in\OO^*$ it holds that $\g_\D^\r(\OO^*|\oo)=1$.
Conversely, any $\, \OO^*$-\textit{modification} $\r$ is also a $\, \OO^*$-\textit{pre-modification}.
\end{lem}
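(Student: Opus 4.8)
The plan is to prove the two implications separately. For the forward direction I must check that the kernels $\g^\r=(\g^\r_\L)_{\L\Subset\R^d}$ defined through \eqref{PoissonMod} with $\r_\L=h_\L/Z_\L$ constitute a $\OO^*$-specification. Three of the requirements are immediate: each $\g^\r_\L(\cdot|\oo_{\L^c})$ has total mass one because $\int\PP_\L(d\oo_\L)h_\L(\oo_\L\oo_{\L^c})=Z_\L(\oo_{\L^c})$ together with the hypothesis $0<Z_\L<\infty$; properness holds because in \eqref{PoissonMod} the exterior $\oo_{\L^c}$ is passed unchanged into the test function, so events measurable w.r.t.\ the outside of $\L$ are left fixed; and the extra hypothesis $\g^\r_\D(\OO^*|\oo)=1$ is precisely what guarantees that each kernel is supported on $\OO^*$ and, crucially, that in the composition $\g^\r_\D(\g^\r_\L(\cdot|\cdot)|\oo)$ the inner kernel $\g^\r_\L$ is only ever evaluated at admissible boundary conditions. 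The real content is therefore the consistency identity $\g^\r_\D(\g^\r_\L(f|\cdot)|\oo)=\g^\r_\D(f|\oo)$ for $\L\subset\D\Subset\R^d$ and bounded measurable $f$.

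The consistency proof rests on two ingredients. First I would reformulate the defining identity of a pre-modification: it is equivalent to saying that, on $\{h_\L>0\}$, the quotient $h_\D/h_\L$ is a function of the configuration outside $\L$ only. Because the partition functions $Z_\L,Z_\D$ likewise depend only on the respective exteriors, this passes to the densities and yields, on $\{\r_\L>0\}$,
\[ \r_\D(\oo_\L\oo_{\L^c})=c_{\L,\D}(\oo_{\L^c})\,\r_\L(\oo_\L\oo_{\L^c}), \]
with $c_{\L,\D}$ measurable w.r.t.\ the exterior of $\L$. Second I would invoke the complete independence of the Poisson process over disjoint regions, which lets me factorize $\PP_\D=\PP_\L\otimes\PP_{\D\sm\L}$. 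Writing out $\g^\r_\D(\g^\r_\L(f|\cdot)|\oo_{\D^c})$, noting that the inner kernel then sees the exterior configuration $\oo''_{\D\sm\L}\oo_{\D^c}$, and splitting the $\PP_\D$-integral accordingly, I substitute the factorization of $\r_\D$; the $\oo''_\L$-integral reduces to $\int\PP_\L(d\oo''_\L)\r_\L(\oo''_\L\,\oo''_{\D\sm\L}\oo_{\D^c})=1$ by the definition of $Z_\L$, and the surviving expression is seen to equal $\g^\r_\D(f|\oo_{\D^c})$ after the same splitting-and-substitution is applied to the right-hand side. This is exactly the step in which the pre-modification hypothesis is indispensable.

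For the converse I would run this computation backwards. Starting from the consistency identity of a given $\OO^*$-Poisson modification $\r$ and again factorizing $\PP_\D=\PP_\L\otimes\PP_{\D\sm\L}$, the inner $\PP_\L$-integral on the composed side produces a factor $c_{\L,\D}(\oo_{\L^c}):=\int\PP_\L(d\tilde\oo_\L)\,\r_\D(\tilde\oo_\L\oo_{\L^c})$, which is manifestly a function of the configuration outside $\L$ and is finite since $\int\PP_\D(d\oo_\D)\r_\D(\oo_\D\oo_{\D^c})=\g^\r_\D(\one|\oo_{\D^c})=1$. Comparing the two sides as $f$ ranges over all bounded measurable functions, and using that equality of $\int f\,a\,d\PP_\D=\int f\,b\,d\PP_\D$ for all such $f$ forces $a=b$ $\PP_\D$-almost everywhere, I obtain $\r_\D=c_{\L,\D}\,\r_\L$ off a Poisson-null set. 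The pre-modification identity is then immediate, since both $\r_\D(\oo_\L\oo_{\L^c})\r_\L(\oo'_\L\oo_{\L^c})$ and $\r_\L(\oo_\L\oo_{\L^c})\r_\D(\oo'_\L\oo_{\L^c})$ equal $c_{\L,\D}(\oo_{\L^c})\r_\L(\oo_\L\oo_{\L^c})\r_\L(\oo'_\L\oo_{\L^c})$.

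I expect the main obstacle to be the bookkeeping in the forward consistency step: correctly tracking which points the restriction operators remove, so that $\g^\r_\L$ genuinely acts on the exterior $\oo''_{\D\sm\L}\oo_{\D^c}$, and recognizing that the pre-modification factorization is precisely what makes the inner normalization collapse to $1$. For the converse the delicate point is the passage from the integral identity to the pointwise relation $\r_\D=c_{\L,\D}\,\r_\L$; since the densities $\r_\L$ are only defined up to $\PP$-null sets, this almost-everywhere conclusion is the natural one and is consistent with the density framework of the setup.
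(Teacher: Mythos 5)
Your argument is correct and is essentially the paper's own proof: properness and normalization come from the definition of $Z_\L$; consistency comes from combining the pre-modification property with the factorization $\PP_\D=\PP_\L\otimes\PP_{\D\sm\L}$ so that the inner normalization integrates to $1$; and the converse is obtained, exactly as in the paper, by testing the consistency identity against all bounded measurable $f$ to conclude $\r_\D=c_{\L,\D}\,\r_\L$ $\PP_\D$-a.e.\ with $c_{\L,\D}(\oo_{\L^c})=\int\PP_\L(d\tilde\oo_\L)\,\r_\D(\tilde\oo_\L\oo_{\L^c})$, and then applying this relation twice. The almost-everywhere qualification in the converse is also exactly how the paper states its conclusion.

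One step needs a small supplement. You establish the factorization $\r_\D=c_{\L,\D}\,\r_\L$ only on $\{\r_\L>0\}$, but you then substitute it under the integral $\int\PP_\L(d\oo''_\L)\,\r_\D(\oo''_\L\oo''_{\D\sm\L}\oo_{\D^c})$; this is legitimate only if, in addition, $\r_\D=0$ holds on $\{\r_\L=0\}$. The point is not vacuous in this paper: the lemma assumes no positivity, and for instance the Widom--Rowlinson modification vanishes on a set of positive $\PP_\L$-measure. The gap closes in one line using the hypothesis $Z_\L(\oo_{\L^c})>0$: choose $\oo'_\L$ with $h_\L(\oo'_\L\oo_{\L^c})>0$; if $h_\L(\oo_\L\oo_{\L^c})=0$, the pre-modification identity gives $h_\D(\oo_\L\oo_{\L^c})h_\L(\oo'_\L\oo_{\L^c})=h_\L(\oo_\L\oo_{\L^c})h_\D(\oo'_\L\oo_{\L^c})=0$, hence $h_\D(\oo_\L\oo_{\L^c})=0$. (Relatedly, your opening claim that the pre-modification identity is \emph{equivalent} to exterior-measurability of $h_\D/h_\L$ on $\{h_\L>0\}$ is slightly too strong: the identity also encodes precisely this zero-propagation.) The paper's write-up sidesteps the issue by applying the pre-modification identity in product form, swapping $h_\L$ and $h_\D$ under the double integral rather than forming quotients; with the one-line remark above, your quotient-based version is equally complete.
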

Next we give a definition of Gibbs point processes via the DLR equation similar to the one for classical Gibbs measures on deterministic spatial graphs see \cite{Ge11}.
\begin{defn}[Gibbs point processes]
A random field $\P$ is called a {\em Gibbs point process} for the $\OO^*$-specification $\g$ iff for every $\L\Subset\R^d$ and for any $f\in\FF^b$, 
\begin{equation}\label{DLR}
\int\P(d\oo) f(\oo) = \int  \P(d\oo) \int \g_\L(d\oo'_\L|\oo)f(\oo'_\L\oo_{\L^{\rm c}})
\end{equation} 
and $\P(\OO^*)=1$. We denote the set of all such measures $\GG(\g)$.
\end{defn}
Existence of Gibbs point processes and the appearance of phase-transitions of multiple solutions to the so-called DLR equation \eqref{DLR} have been proved in a number of cases, see for example \cite{GeHa96,ChChKo95,DeDrGe12}.
In the next section we present our main result.

\section{Hyperedge potentials and the representation theorem}\label{Sec3}

\subsection{Hyperedge potentials}
Let us start by giving a more formal definition of interaction potentials in the continuum. 
For this let us denote by $\Of=\{\o\in\O:\, |\o|<\infty\}$ the set of finite configurations in $\O$ and $\Ff$ the trace $\s$-algebra of $\FF$ in $\Of$. The product space $\Of\times\OO$ carries the product $\s$-algebra $\Ff\otimes\FF$. With $\EE\subset\bar\EE=\{(\eta,\oo)\in\Of\times\OO:\, \eta\subset\o\}$ we denote a \emph{hypergraph structure} of $\OO$ as presented in \cite{DeDrGe12} for models with trivial single-site state-space. For $\oo\in\OO$ we write $\EE(\oo)=\{\eta\Subset\o:\, (\eta,\oo)\in\EE\}$.
Based on the hypergraph structure we now define \emph{hyperedge potentials}.
\begin{defn}[Hyperedge Potential]\label{Potential}
A hyperedge {\em potential} (or simply potential) is a 
measurable function $\Phi: \EE\mapsto(-\infty,\infty]$ 
with the following properties:
\begin{enumerate}
\item {\em Finite-horizon}: For each $(\eta,\oo)\in\EE$ there exists $\D(\eta,\oo)\Subset\R^d$ such that if $(\eta,\oo')\in\EE$ and $\oo_{\D(\eta,\oo)}=\oo'_{\D(\eta,\oo)}$, then $\Phi(\eta,\oo)=\Phi(\eta,\oo')$.
\item {\em Well-definedness}: For all $\L\Subset\R^d$ 
the series 
\begin{equation*}
\begin{split}
H_\L(\oo)=\sum_{\eta\in\EE(\oo):\, \eta\cap\L\neq\emptyset}\Phi(\eta, \oo)
\end{split}
\end{equation*}
exists in the sense that $H_\L(\oo)$ is the limiting point of the net 
$$\Big(H_{\L,\D}(\oo)\Big)_{\D\Subset\R^d}$$
with 
\begin{equation*}
\begin{split}
H_{\L,\D}(\oo)=\sum_{\eta\in\EE(\oo_\D):\, \eta\cap\L\neq\emptyset}\Phi(\eta,\oo).
\end{split}
\end{equation*}
\end{enumerate}
\end{defn} 
For $\L\Subset\R^d$ and $r>0$ we denote by $B_r(\L)=\{x\in\R^d:\, |x-y|<r \text{ for some }y\in\L\}$ the $r$-mollification of $\L$. Next we distinguish potentials in view of their finite-horizon properties. 
\begin{defn}[Uniform finite-horizon \& vacuum potentials]\label{Uniform finite-horizon}
We call a potential a
\begin{enumerate}
\item {\em uniformly finite-horizon potential} if for all $(\eta,\oo)\in\EE$ the finite-horizon property holds with $\D(\eta,\oo)=\D(\eta)$.
\item {\em $r$-uniformly finite-horizon potential} if for all $(\eta,\oo)\in\EE$ the finite-horizon property holds with $\D(\eta,\oo)=B_r(\eta)$ with $r>0$.
\item {\em vacuum potential} if for all $(\eta,\oo)\in\EE$ the finite-horizon property holds with $\D(\eta,\oo)=\eta$.
\end{enumerate}
\end{defn}
We can further distinguish different types of potentials w.r.t.~their convergence properties. In order to make the connection to the domains $\OO^*$ of admissible configurations, let us write $\EE^*$ for hypergraph structures which are subsets of $\bar \EE^*=\{(\eta,\oo)\in\Of\times\OO^*:\, \eta\subset\o\}$. 
\begin{defn}[Potential convergence]\label{PotConv}
We call a potential $\Phi$ on $\EE^*$
\begin{enumerate}
\item {\em uniformly convergent} if for all $\L\Subset\R^d$ we have 
$$\lim_{\D\uparrow\R^d}\sup_{\oo\in\OO^*}|H_{\L,\D}(\oo)-H_\L(\oo)|=0.$$
\item {\em absolutely summable} if for all $\L\Subset\R^d$ and $\oo\in\OO^*$ we have
$$\sum_{\eta\in \EE^*(\oo):\, \eta\cap\L\neq\emptyset}|\Phi(\eta,\oo)|<\infty.$$
\end{enumerate}
\end{defn}
Clearly, (1) implies well-definedness, but absolute summability does not imply uniform convergence due to the non-fixed geometry. The latter implication is correct for example for lattice systems, see~\cite[Chapter 2.1.]{Ge11}.
\subsection{Representation via vacuum potentials}
The next definition describes the fundamental goal behind this work.
\begin{defn}[Potential representation]
We say that a {\em potential $\Phi$ represents the $\OO^*$-Poisson modification $\r$} if $\Phi$ is defined on a hypergraph structure $\EE^*$ and for all $\L\Subset\R^d$ and $\oo\in\OO^*$ 
\begin{equation*}
\begin{split}
\r_\L(\oo)=Z_\L(\oo_{\L^c})^{-1}\exp\big(-\sum_{\eta\in\EE^*(\oo):\, \eta\cap\L\neq\emptyset}\Phi(\eta, \oo)\big).
\end{split}
\end{equation*}
\end{defn}
Our first result establishes existence of such potentials for given pre-modifications under the condition of vacuum positivity and continuity required to hold only in the direction of the vacuum. 
\begin{defn}[Vacuum quasilocality]
We call a real-valued measurable function $f$ with domain $\OO^*\subset\OO$ {\em vacuum quasilocal} if for all $\oo\in\OO^*$ we have that
$$\lim_{\L\uparrow\R^d}|f(\oo)-f(\oo_\L)|=0.$$
Moreover, $f$ is called {\em vacuum uniformly log-quasilocal} if $f$ is positive and 
$$\lim_{\L\uparrow\R^d}\sup_{\oo\in\OO^*}|\log f(\oo)-\log f(\oo_\L)|=0.$$
\end{defn}
Here and in the sequel, the limits should be understood as limits of nets on $\{\L:\L\subset\R^d\}$ ordered by inclusion. Clearly, uniform quasilocality w.r.t. the $\tau$-topology, i.e., 
$$\lim_{\L\uparrow\R^d}\sup_{\oo,\oo'\in\OO}|f(\oo_\L\oo'_{\L^c})-f(\oo)|=0$$
implies vacuum quasilocality but not uniform log-quasilocality even if $f$ is assumed positive. The last implication is true under the additional assumption of uniform positivity which is meaningful for example in lattice systems. But in our continuous setting even for $f$ given as the Poisson modification of the Potts gas we have uniform log-quasilocality but no uniform positivity. We call $\, \OO^*$-pre-modification $\r$ vacuum quasilocal (resp.~vacuum uniformly log-quasilocal) if for all measurable $\L\Subset\R^d$ we have that $\r_\L$ is vacuum quasilocal (resp.~vacuum uniformly log-quasilocal).

\begin{thm}\label{Representation}
Suppose $\r$ is a vacuum positive and vacuum quasilocal $\, \OO^*$-pre-modification such that for all $\L\Subset\R^d$ and $\oo_{\L^c}\in \OO^*$ we have $$\int\PP_\L(d \oo_\L)\r_\L(\oo_\L\oo_{\L^c})=1.$$ 
Then, there exists a unique vacuum potential $\Phi$ on the hypergraph structure $\bar\EE^*$. Moreover, if $\r$ is vacuum uniformly log-quasilocal, then $\Phi$ is uniformly convergent. 
\end{thm}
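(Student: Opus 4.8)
The plan is to build $\Phi$ by a continuum version of Möbius inversion (inclusion--exclusion) relative to the empty configuration, which plays the role of the vacuum, and then to transport the resulting finite-volume identity to infinite boundary conditions by means of vacuum quasilocality.

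First I would use the defining identity of a pre-modification to show that for every finite marked configuration $\oo$ the ratio
$$g(\oo):=\frac{\r_\L(\oo)}{\r_\L(\emptyset)},\qquad \L\supseteq\o,$$
is well defined, i.e.\ independent of the box $\L$ containing the grey configuration $\o$: the pre-modification relation forces $\r_\D/\r_\L$ to depend only on the configuration outside $\L$, so this dependence cancels in the normalized ratio. By vacuum positivity $\r_\L(\emptyset)>0$, so $U(\oo):=-\log g(\oo)$ is well defined with values in $(-\infty,\infty]$ and $U(\emptyset)=0$. For a finite hyperedge $\eta$ I then set
$$\Phi(\eta,\oo):=\Phi(\eta):=\sum_{\zeta\subseteq\eta}(-1)^{|\eta\setminus\zeta|}\,U(\zeta).$$
By construction $\Phi(\eta)$ depends only on the marked points sitting on $\eta$, hence it is a vacuum potential on $\bar\EE^*$, and Möbius inversion gives $\sum_{\eta\subseteq\oo}\Phi(\eta)=U(\oo)$ for every finite $\oo$.

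The core of the existence proof is a telescoping identity. Using the pre-modification relation once more to cancel both the $\L$-dependence and the partition function, one obtains for a finite boundary condition
$$-\log\frac{\r_\L(\oo)}{\r_\L(\oo_{\L^c})}=U(\oo)-U(\oo_{\L^c})=\sum_{\eta\subseteq\o:\,\eta\cap\L\neq\emptyset}\Phi(\eta),$$
since the hyperedges contained entirely in the exterior cancel and exactly those meeting $\L$ survive. Evaluated at the configuration truncated to a bounded $\D$, the right-hand side is precisely $H_{\L,\D}(\oo)$, while $H_\L(\oo_{\L^c})=0$ identifies $Z_\L(\oo_{\L^c})=\r_\L(\oo_{\L^c})^{-1}$. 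I would then let $\D\uparrow\R^d$: vacuum quasilocality of $\r_\L$, applied both to $\oo$ and to its emptied-in-$\L$ version $\oo_{\L^c}$ (which again lies in $\OO^*$), makes the left-hand side converge, so the net $(H_{\L,\D}(\oo))_\D$ converges; this establishes simultaneously the well-definedness of $\Phi$ and the representation $\r_\L(\oo)=Z_\L(\oo_{\L^c})^{-1}\exp(-H_\L(\oo))$. Uniqueness is then immediate: any representing vacuum potential must satisfy $\sum_{\emptyset\neq\eta\subseteq\oo}\Phi(\eta)=U(\oo)$ for all finite $\oo$ (take $\L\supseteq\o$ with empty boundary), and Möbius inversion determines $\Phi$ uniquely from $U$.

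For the final assertion, under vacuum uniform log-quasilocality I would rewrite the tail of the Hamiltonian as
$$H_\L(\oo)-H_{\L,\D}(\oo)=-\big[\log\r_\L(\oo)-\log\r_\L(\oo_\D)\big]+\big[\log\r_\L(\oo_{\L^c})-\log\r_\L((\oo_{\L^c})_\D)\big],$$
and bound each bracket by $\sup_{\oo\in\OO^*}|\log\r_\L(\oo)-\log\r_\L(\oo_\D)|$, which tends to $0$ as $\D\uparrow\R^d$ uniformly in $\oo$ by assumption; this is exactly uniform convergence. The step I expect to be the main obstacle is the passage to infinite boundary conditions: the hyperedge series is only a net limit and is not a priori absolutely convergent, so everything hinges on the telescoping identity $H_{\L,\D}(\oo)=U(\oo_\D)-U((\oo_\D)_{\L^c})$, which converts convergence of the potential series into the (vacuum-)quasilocal convergence of the log-densities, together with the bookkeeping for the degenerate case $\r_\L(\oo)=0$, where the relevant values of $\Phi$ and $H_\L$ become $+\infty$.
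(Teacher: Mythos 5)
Your proposal is correct and takes essentially the same route as the paper's proof: the vacuum potential defined by inclusion--exclusion (M\"obius inversion) relative to the empty configuration, volume-independence via the pre-modification property, the telescoping identity $H_{\L,\D}(\oo)=-\log\big(\r_\L(\oo_\D)/\r_\L(\oo_{\D\sm\L})\big)$ combined with vacuum (uniform log-)quasilocality to pass to infinite boundary conditions, and the same bound for uniform convergence. The only cosmetic difference is in the uniqueness step, where the paper argues via the difference of two representing potentials being $\FF_{\L^c}$-measurable and then applies inclusion--exclusion, which amounts to the same M\"obius-inversion argument you give directly on finite configurations.
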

Let us note that for example in the lattice case, potentials can be constructed which are unique w.r.t.~certain $\a$-normalizations where $\a$ an arbitrary single-site measure. This freedom is not available in the continuum case since the geometry is not fixed. Further we note that the construction of the vacuum potential has been performed multiple times for lattices systems, see for example \cite{Gr73,Ko74}, and even more general point fields in \cite[Theorem 1B]{Ko76}, but there without the uniform convergence.

\medskip
The following statement that finite-range properties of Poisson modifications transfer to their associated vacuum potential is already partially presented in  \cite[Lemma 2]{Ko76}.
\begin{cor}\label{Representation_Range}
Let $\r$ be as in Theorem~\ref{Representation} and $\Phi$ the corresponding vacuum potential. Additionally assume that $\r$ is of range $r>0$, i.e., for all $\L\Subset\R^d$, $\r_\L$ is $\FF_{B_r(\L)}$ measurable. Then, if $\eta$ is such that there exist $x,y\in\eta$ with $|x-y|>r$, we have $\Phi(\eta,\oo)=0$.
\end{cor}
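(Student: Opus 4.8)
The plan is to exploit the explicit construction of the vacuum potential $\Phi$ from Theorem~\ref{Representation} together with the $\FF_{B_r(\L)}$-measurability of $\r$. The construction of the vacuum potential is, as is standard in this Kozlov--Sullivan type of argument, given by an inclusion--exclusion (Möbius inversion) formula. Concretely, for a finite grey configuration $\eta=\{x_1,\dots,x_n\}$ the vacuum potential should be of the form
\begin{equation*}
\Phi(\eta,\oo)=-\sum_{\zeta\subseteq\eta}(-1)^{|\eta\setminus\zeta|}\log\r_{\{\text{suitable volume}\}}(\zeta\,\text{in the vacuum}),
\end{equation*}
so that the potential attached to $\eta$ is a signed sum of log-densities of all sub-configurations $\zeta\subseteq\eta$, each evaluated against the vacuum (empty) boundary condition, where the density is read off in a fixed finite volume containing $\eta$. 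I would first extract from the proof of Theorem~\ref{Representation} the precise form of this inversion formula and pin down which finite volumes enter.

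Given that formula, the core of the argument is the following cancellation. Suppose $\eta$ contains two points $x,y$ with $|x-y|>r$. The plan is to show that the alternating sum over subsets $\zeta\subseteq\eta$ factorizes (or telescopes) because the density $\r$ has range $r$: the presence or absence of a point at distance greater than $r$ from another point cannot simultaneously influence the same local density term, so the contributions of subsets containing $x$ cancel against those not containing $x$ (pairing $\zeta$ with $\zeta\cup\{x\}$ or $\zeta\setminus\{x\}$). First I would fix one of the two far-apart points, say $x$, and organize the subsets of $\eta$ into pairs $(\zeta,\zeta\cup\{x\})$ with $x\notin\zeta$. For each such pair I would argue that the two log-density values are equal, using that $\r_\L$ is $\FF_{B_r(\L)}$-measurable together with the vacuum evaluation: because $y$ lies outside $B_r$ of the relevant region determined by $\zeta$ near $x$, toggling $x$ does not change the density in the volumes that actually see $x$, so the paired terms carry equal logarithms with opposite signs in the alternating sum and cancel, forcing $\Phi(\eta,\oo)=0$.

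The step I expect to be the main obstacle is matching the range condition to the geometry hidden in the inversion formula. The subtlety is that the finite-range property is stated for the kernels $\r_\L$ as $\FF_{B_r(\L)}$-measurability, whereas $\Phi(\eta,\oo)$ is assembled from densities in volumes attached to the various subsets $\zeta$, and one must verify that for every $\zeta$ the point $y$ (or $x$) genuinely falls outside the mollified region that the density depends on, so that the pairing really produces identical factors. In particular one has to be careful that the volumes chosen in the construction are not so large that $B_r$ of them swallows both $x$ and $y$; this is where the specific (vacuum) normalization and the fact that the density is evaluated locally around $\eta$ are essential. Once the geometry is correctly bookkept, the cancellation is purely combinatorial. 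I would therefore structure the proof as: recall the inversion formula; reduce to showing pairwise cancellation under toggling a single far point; verify the geometric/measurability input that makes the paired log-densities equal; and conclude $\Phi(\eta,\oo)=0$ by the alternating-sign telescoping.
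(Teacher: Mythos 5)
Your proposal has the right skeleton (the M\"obius-inversion formula for the vacuum potential, then a cancellation forced by the finite range), and it matches the paper's strategy at that level, but the cancellation mechanism you describe would fail, and the tool that makes the correct mechanism work is missing. You pair $\zeta$ with $\zeta\cup\{x\}$ and claim the two log-densities are \emph{equal}. They are not: adding the particle $x$ genuinely changes $\r_\L$ whenever $x$ interacts with anything. Already for $\eta=\{x,y,z\}$ with $|x-z|<r$ and $|x-y|>r$, the pair $(\{z\},\{x,z\})$ has $\r_\L(\oo_{\{z\}})\neq\r_\L(\oo_{\{x,z\}})$ in any model with a nontrivial pair interaction, so term-by-term cancellation under a single toggle cannot hold. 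What is true is only that the \emph{increment} caused by adding $x$ is the same whether or not $y$ is present. Hence one must toggle both far points: setting $\eta'=\eta\sm\{x,y\}$ and grouping the subsets of $\eta$ into quadruples $\xi_1,\ \xi_1\cup\{x\},\ \xi_1\cup\{y\},\ \xi_1\cup\{x,y\}$ indexed by $\xi_1\subset\eta'$ (the vacuum normalizations cancel inside each quadruple because the four signs sum to zero), the claim reduces to the vanishing of the mixed second difference
\begin{equation*}
\log\frac{\r_\L(\oo_{\xi_1}\xx\yy)}{\r_\L(\oo_{\xi_1}\yy)}-\log\frac{\r_\L(\oo_{\xi_1}\xx)}{\r_\L(\oo_{\xi_1})},
\end{equation*}
where $\xx,\yy$ denote the marked points at $x,y$. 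This is exactly how the paper proceeds.

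The second gap is the obstacle you flagged yourself but resolved incorrectly. In the inversion formula all densities are evaluated in one volume $\L\supset\eta$, so $B_r(\L)$ contains both $x$ and $y$, and the hypothesis that $\r_\L$ is $\FF_{B_r(\L)}$-measurable gives, by itself, no independence from $y$; neither the vacuum normalization nor ``local evaluation around $\eta$'' fixes this, since no density indexed by a small volume appears in the formula yet. The missing tool is the pre-modification (consistency) property: the two configurations in each of the ratios above differ only at the single point $x$, so each ratio can be recomputed in any smaller volume containing $x$, in particular
\begin{equation*}
\frac{\r_\L(\oo_{\xi_1}\xx\yy)}{\r_\L(\oo_{\xi_1}\yy)}=\frac{\r_{B_{s-r}(x)}(\oo_{\xi_1}\xx\yy)}{\r_{B_{s-r}(x)}(\oo_{\xi_1}\yy)},
\qquad
\frac{\r_\L(\oo_{\xi_1}\xx)}{\r_\L(\oo_{\xi_1})}=\frac{\r_{B_{s-r}(x)}(\oo_{\xi_1}\xx)}{\r_{B_{s-r}(x)}(\oo_{\xi_1})},
\end{equation*}
with $s=|x-y|>r$ and $B_{s-r}(x)$ the open ball of radius $s-r$ around $x$. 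Only now does the range hypothesis bite: $\r_{B_{s-r}(x)}$ is $\FF_{B_r(B_{s-r}(x))}$-measurable, and $B_r(B_{s-r}(x))=B_s(x)$ does not contain $y$, so $y$ may be deleted from all arguments, the two ratios coincide, each quadruple vanishes, and $\Phi(\eta,\oo)=0$. Without this volume-shrinking step your argument cannot close, and with it the proof is precisely the paper's.
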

\subsection{Representation via absolutely-summable potentials}
The following main result of this paper shows that it is possible to derive a representation for vacuum uniformly log-quasilocal $\OO^*$-pre-modifications given by an absolutely-summable potential. This representation has the uniformly finite-horizon property but is no longer unique. 
\begin{thm}\label{Representation_Absolute}
Let $\r$ be a vacuum uniformly log-quasilocal $\, \OO^*$-pre-modification.
Then, there exists a representation of $\r$ via an absolutely-summable potential with uniform finite-horizon property.
\end{thm}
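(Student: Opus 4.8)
The plan is to take the uniformly convergent vacuum potential supplied by Theorem~\ref{Representation} as a starting point and then to \emph{resum} it, exploiting the slack offered by a finite (rather than zero) horizon to turn uniform convergence into absolute summability. First I would reduce to the hypotheses of Theorem~\ref{Representation}. A vacuum uniformly log-quasilocal pre-modification is positive (hence vacuum positive) and, since $|\log\r_\L(\oo)-\log\r_\L(\oo_\L)|\to0$ forces $|\r_\L(\oo)-\r_\L(\oo_\L)|\to0$, also vacuum quasilocal; moreover the representing identity already carries the partition function $Z_\L(\oo_{\L^c})$, so any rescaling of $\r_\L$ by an $\FF_{\L^c}$-measurable factor is immaterial and we may assume the normalization $\int\PP_\L(d\oo_\L)\r_\L(\oo_\L\oo_{\L^c})=1$. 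Theorem~\ref{Representation} then yields the unique vacuum potential $\Phi$ with Hamiltonian $H_\L$, and vacuum uniform log-quasilocality makes $\Phi$ uniformly convergent: for each $\L\Subset\R^d$,
\[
\lim_{\D\uparrow\R^d}\sup_{\oo\in\OO^*}|H_{\L,\D}(\oo)-H_\L(\oo)|=0 .
\]

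The task is now to manufacture from $\Phi$ an absolutely-summable, uniformly finite-horizon potential $\Psi$ with the \emph{same} Gibbs weights, i.e.\ whose Hamiltonian differs from $H_\L$ only by an $\FF_{\L^c}$-measurable term, so that the representing identity persists with $Z_\L$ replaced by the new partition function. The mechanism is a telescoping of the vacuum Hamiltonian along horizons adapted to the modulus of uniform convergence. Quantitatively, I would extract from the display a sequence of radii $r_1<r_2<\cdots\uparrow\infty$ for which the tails $\sup_{\oo\in\OO^*}|H_{\L,B_{r_n}(\L)}(\oo)-H_\L(\oo)|$ decay summably fast, and then reorganize the contributions $\Phi(\eta,\oo)$ into new hyperedges carrying a finite horizon: a term would be attributed to a hyperedge $\eta'$ rooted (say) at its lexicographically minimal point $x$, collecting all vacuum hyperedges reaching the $n$-th radial shell around $x$, and would be declared to depend on $\oo$ only inside $B_{r_n}(x)$. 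Since $x$ and the scale $n$ are recoverable from $\eta'$ alone (the root is the lex-minimal point and $n$ is read off from the radius of $\eta'$), this produces a genuine measurable potential on a hypergraph structure $\EE'\subset\bar\EE^*$ with the uniform finite horizon $\D(\eta')=B_{r_n}(x)$.

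For absolute summability the point is that each such term is controlled \emph{in absolute value} by the $n$-th uniform tail — via the difference of two windowed energies, so that the possibly divergent sum $\sum|\Phi(\eta,\oo)|$ never appears — and that a hyperedge rooted at $x$ at scale $n$ can meet $\L$ only when $\dist(x,\L)<r_n$. Summing the scale-$n$ bounds against the locally finite number of roots within $r_n$ of $\L$, and choosing the radii $r_n$ to grow fast enough that these contributions are dominated, should give $\sum_{\eta'\in\EE'(\oo):\,\eta'\cap\L\neq\emptyset}|\Psi(\eta',\oo)|<\infty$ for each $\oo\in\OO^*$. This is exactly the place where uniform, as opposed to merely pointwise, log-quasilocality is indispensable, and where plain absolute summability of the vacuum potential would not suffice.

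I expect two steps to carry the real difficulty. The first, and most serious, is keeping the representation intact: the resummation must change $H_\L$ only by an $\FF_{\L^c}$-measurable amount for \emph{every} $\L$ simultaneously, yet grouping hyperedges that straddle $\partial\L$ tends to introduce spurious $\oo_\L$-dependence (whether a far cluster ``reaches into'' $\L$ depends on the points of $\oo$ inside $\L$). The grouping therefore has to be arranged — by telescoping the $\L$-Hamiltonian itself over the windows $B_{r_n}(\L)$ and only afterwards relabelling the increments as $\L$-independent hyperedge terms — so that the residual discrepancy consists exactly of contributions of hyperedges contained in $\L^c$. The second difficulty is the quantitative matching of the previous paragraph: selecting the radii $r_n$ so that the uniform tail bounds, which a priori come without a rate, nonetheless dominate for each fixed locally finite $\oo$ the growing number of roots within distance $r_n$ of $\L$; this is where the structure of $\OO^*$ (the temperedness implicit in well-definedness of the vacuum potential) must be used. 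Checking the uniform finite-horizon property and well-definedness of $\Psi$ on $\EE'$ is then routine.
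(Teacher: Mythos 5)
Your overall strategy---pass to the uniformly convergent vacuum potential of Theorem~\ref{Representation} and resum it into shell-indexed hyperedges rooted at a distinguished point, carrying the shell as a uniform finite horizon---is the same as the paper's, and your reduction to Theorem~\ref{Representation} as well as your identification of the two delicate points (the $\FF_{\L^c}$-measurability of the discrepancy, and the quantitative summability) are on target. The genuine gap is in how you resolve the second point, and it originates in your choice of rooting. You root each hyperedge at its \emph{lexicographically} minimal point, i.e.\ in a translation-invariant order. Under such an order the set of points preceding a given bounded region is unbounded, so hyperedges meeting a fixed $\L$ can be rooted at infinitely many points of $\o$; you are then forced into the balancing you describe, namely $\sum_n N_n(\oo)\,\e_n<\infty$ with $N_n(\oo)=|\o\cap B_{r_n}(\L)|$ and $\e_n$ the uniform tail at radius $r_n$. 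This cannot be achieved under the stated hypotheses: the uniform convergence coming from vacuum uniform log-quasilocality carries \emph{no rate}, so the radii $r_n$ may have to grow arbitrarily fast, and $\OO^*$ carries no density assumption, so for any fixed admissible choice of $(r_n,\e_n)$ there are locally finite configurations (place about $1/\e_n$ points in $B_{r_n}(\L)\sm B_{r_{n-1}}(\L)$ for each $n$) with $\sum_n N_n(\oo)\e_n=\infty$. Your appeal to ``temperedness implicit in well-definedness of the vacuum potential'' does not close this: well-definedness of the vacuum Hamiltonian imposes no growth bound on $\o\in\OO^*$, and even a polynomial density bound would not help when the tails decay slower than $r^{-d}$. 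Note also that the radii cannot be allowed to depend on $\oo$ to fix this, since that would destroy the uniform finite-horizon property. What you propose is in substance the proof of the paper's Theorem~\ref{Representation_Absolute_2}, which is exactly the translation-invariant-ordering version and which requires \emph{both} extra hypotheses that Theorem~\ref{Representation_Absolute} does not have: summable-vacuum-uniform log-quasilocality, $\sum_n n^d\k(n)<\infty$, and restriction to configurations with $\sup_n n^{-d}|\o_{\L_n}|<\infty$.

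The paper sidesteps the counting problem entirely by using a non-translation-invariant total order: points are ordered first by euclidean distance to the origin, then by angle, so that every point has a \emph{bounded} set of predecessors. Since every resummed hyperedge rooted at $y$ consists only of points $\ge y$ in this order, a hyperedge meeting $\L$ must be rooted in the bounded set $\{y:\, y\le \max \o_\L\}$, which contains only finitely many points of $\o$---uniformly over all scales $m$. Absolute summability then reduces to a per-root estimate: choosing the shells so that the uniform tails are smaller than $m^{-2}$, a telescoping argument gives $\sum_m|\Psi(\o_{x,m},\oo)|\le C(\oo,x)+2\sum_m m^{-2}<\infty$ for each of the finitely many relevant roots $x$, and no comparison with a growing number of roots ever enters. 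This is precisely why vacuum uniform log-quasilocality alone suffices there, at the price that the resulting potential is not translation invariant---the trade-off the paper discusses between its two theorems. If you replace your lexicographic root by this origin-centered ordering, the remainder of your plan, including your (correct) insistence that the regrouping change $H_\L$ only by an $\FF_{\L^c}$-measurable term, can be carried out along the paper's lines.
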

Let us note that the constructed absolutely-summable potential is defined on a much sparser hypergraph structure $\EE^*$ described in the remarks following the proof.

\subsection{Representation via translation-invariant potentials}
For $x\in\R^d$ we denote by $\theta_x:\O\to\O$ the spatial shift of configurations by $x$, i.e., $\theta_x\o=\{y\in\R^d:\, y-x\in \o\}$. With a slight abuse of notation we also write $\theta_x\oo=\theta_x(\o,\s_\o)=(\theta_x\o,\theta_x\s_{\theta_x\o})$ where for $y\in\theta_x\o$,  $\theta_x\s_y=\s_{{y-x}}$ and $\theta_x\L=\L+x$ for all $\L\Subset\R^d$. In this section, we assume that $\PP$ is translation invariant with intensity measure $\mu(d\s, d x)= \a\nu(d \s)d x$ where, $\a>0$ and $\nu$ is a finite measure on $(E,\mathfrak{E})$. A $\, \OO^*$-Poisson modification is called translation invariant, if for all $\oo\in\OO^*$ and $x\in\R^d$ we have $\theta_x\oo\in\OO^*$ and $\r_\L(\oo)=\r_{\theta_x\L}(\theta_x\oo)$ for all $\L\Subset\R^d$. A potential $\Phi$ is called translation invariant, if for all $(\eta,\oo)\in\EE^*$ and $x\in\R^d$ we have $(\theta_x\eta,\theta_x\oo)\in\EE^*$ and $\Phi(\theta_x\eta, \theta_x\oo)=\Phi(\eta, \oo)$.

\medskip
It is easy to see from the proof that, under the conditions of Theorem~\ref{Representation}, if the $\, \OO^*$-pre-modification $\r$ is additionally translation invariant, its unique vacuum potential is also translation invariant. However, the absolutely-summable potential constructed in the proof of Theorem~\ref{Representation_Absolute} is not translation invariant even if the associated $\, \OO^*$-pre-modification is translation invariant. This is a consequence of the resummation method based on a non-translation-invariant ordering of points, see Figure~\ref{PIX_2}. Our last theorem will improve on this aspect by using an additional summability condition in the quasilocality, see below. Let $\L_n\Subset\R^d$ denote the centered ball with diameter $n\in\N$.
\begin{defn}[Summable-vacuum-uniform log-quasilocality]
We call a real-valued measurable function $f$ with domain $\OO^*\subset\OO$ {\em summable-vacuum-uniformly log-quasilocal} if $f$ is positive and for 
$$\k(n)=\sup_{\oo\in\OO^*}|\log f(\oo)-\log f(\oo_{\L_n})|$$
we have $\sum_{n=1}^\infty n^d\k(n)<\infty$. 
\end{defn}
As presented in the next section, as an example, the element $\r_0$ (indexed by the origin) of the $\, \OO$-pre-modification $\rho$ of a time-evolved Widom-Rowlinson model from~\cite{JaKu16}, is summable-vacuum-uniformly log-quasilocal.

\begin{thm}\label{Representation_Absolute_2}
Let $\r$ be a translation-invariant $\, \OO^*$-pre-modification where $\r_0$ is summable-vacuum-uniformly log-quasilocal. Then on $\{\oo\in\OO:\, \sup_{n\in\N}n^{-d}|\o_{\L_n}|<\infty\}$, there exists a representation of $\r$ via a translation-invariant absolutely-summable potential with uniform finite-horizon property.
\end{thm}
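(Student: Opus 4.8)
The plan is to keep the translation-invariant vacuum potential from Theorem~\ref{Representation} as the conceptual skeleton and to replace the ordering-based resummation of Theorem~\ref{Representation_Absolute} (which breaks translation invariance, cf.\ Figure~\ref{PIX_2}) by a radial resummation in concentric shells centered at each point. The key observation is that the balls $\L_n(x):=\theta_x\L_n=\L_n+x$ around a configuration point $x$ are intrinsic objects carrying no choice of ordering, so any potential built from increments over these balls is automatically translation invariant. The summability hypothesis $\sum_n n^d\k(n)<\infty$ together with the restriction to the set $\{\oo:\sup_n n^{-d}|\o_{\L_n}|<\infty\}$ is then exactly what is needed to upgrade pointwise uniform convergence to pointwise absolute summability: the factor $n^d$ will absorb the number of shell-centers within range.

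For the construction, I would first exploit translation invariance to reduce everything to the single object $\r_0$. For $x\in\o$ and $n\ge1$ set
$$\d_n(x,\oo)=\log\r_0\big((\theta_{-x}\oo)_{\L_n}\big)-\log\r_0\big((\theta_{-x}\oo)_{\L_{n-1}}\big),$$
which depends on $\oo$ only through $\oo_{\L_n(x)}$ and, by summable-vacuum-uniform log-quasilocality, satisfies $|\d_n(x,\oo)|\le\k(n)+\k(n-1)\le2\k(n-1)$. Since $\k(n)\to0$, the telescoping sum $\sum_{n\ge1}\d_n(x,\oo)$ converges to the full single-point log-density at $x$ minus its self-term at scale $\L_0$. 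I would then assign each increment to the hyperedge it naturally determines, $\eta_{x,n}:=\oo\cap\L_n(x)$, and define the potential by collecting the finitely many pairs that realize a given edge,
$$\Phi(\eta,\oo)=\sum_{\substack{x\in\eta,\ n\ge1\\ \oo\cap\L_n(x)=\eta}}\d_n(x,\oo).$$
For fixed $\oo$ in the density-restricted set each point $x\in\eta$ contributes only the finitely many levels $n$ lying between two consecutive inter-point gaps, so the inner sum is finite, the edge $\eta$ is finite, and the whole object is manifestly invariant under $\theta_x$.

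It then remains to verify three things. First, the representation identity: summing $\Phi$ over all hyperedges meeting $\L$ must reassemble $-\log\r_\L+\log Z_\L$. Here I would use the pre-modification and consistency relations to express the volume density $\r_\L$ through the translated single-point data $\r_0$, so that the radial telescoping over all points of $\o_\L$ collapses to the correct Hamiltonian $H_\L$. Second, the uniform finite-horizon property: the horizon of $\eta$ is contained in $\bigcup_{x\in\eta}\L_{n(x)}(x)$ for the largest relevant level $n(x)$, which I would bound by a function of $\eta$ alone to land in Definition~\ref{Uniform finite-horizon}. Third, absolute summability: for fixed $\L\subset\L_m$,
$$\sum_{\eta\cap\L\neq\emptyset}|\Phi(\eta,\oo)|\le\sum_{n\ge1}2\k(n-1)\,\#\{x\in\o:\L_n(x)\cap\L\neq\emptyset\}\lesssim\sum_{n\ge1}(n+m)^d\,\k(n-1)<\infty,$$
where the density bound $|\o_{\L_n}|\lesssim n^d$ supplies the factor $(n+m)^d$ and the hypothesis $\sum_n n^d\k(n)<\infty$ closes the estimate.

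The main obstacle I expect is the reassembly step in a purely radial, order-free framework: one must verify that distributing the energy through balls centered at every point, rather than through a single sequential insertion of points, still sums \emph{exactly} to $H_\L$ with no residual double counting, and this is precisely where the consistency of the pre-modification has to be used carefully. A secondary difficulty is making the finite horizon uniform in $\oo$, since the natural top-level ball $\L_{n(x)}(x)$ depends on inter-point gaps; some care is needed to re-bracket the increments so that the horizon depends on $\eta$ only, while keeping both the representation identity and the absolute-summability estimate intact on the density-restricted set.
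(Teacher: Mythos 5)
You have correctly identified the danger point yourself, but it is not a technicality: the reassembly step fails, and it fails structurally, not fixably. Your increments $\d_n(x,\oo)$ telescope, for each $x\in\o_\L$, to the \emph{full} conditional single-point log-density $\log\r_{\{x\}}(\oo)-\log\r_{\{x\}}(\oo_{\{x\}})$, i.e.\ the energy of $x$ given \emph{all} other points. Summing this over $x\in\o_\L$ counts each vacuum interaction $\eta$ with multiplicity $|\eta\cap\o_\L|$ rather than once: in terms of the vacuum potential, $\sum_{x\in\o_\L}\log\bigl[\r_{\{x\}}(\oo)/\r_{\{x\}}(\oo_{\{x\}^c})\bigr]=-\sum_{\eta\Subset\o}|\eta\cap\o_\L|\,\Phi(\eta,\oo)$. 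The simplest test case already kills the construction: for a finite-range pair potential $\phi$ and a configuration with two interacting points $x,y$ deep inside $\L$, your candidate Hamiltonian contains $2\phi(x,y)$ where $\phi(x,y)$ is required, and the discrepancy $\sum_{\eta}(|\eta\cap\o_\L|-1)\Phi(\eta,\oo)$ is not $\FF_{\L^c}$-measurable, so it cannot be absorbed into the partition function; the specification you produce is genuinely different from $\g^\r$. No purely radial, order-free assignment of energy to points can avoid this: you need a device that attributes each interaction to \emph{exactly one} of its points, and that device is an ordering. (Your secondary worry is also real: since the set of levels $n$ with $\oo\cap\L_n(x)=\eta$ depends on how far the nearest point of $\oo\sm\eta$ is from $\eta$, your $\Phi(\eta,\cdot)$ has an $\oo$-dependent horizon, so the uniform finite-horizon property fails as well.)

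The paper's proof shows that the right move is not to eliminate the ordering but to make it compatible with translations: one takes a \emph{group} ordering on $\R^d$ (e.g.\ lexicographic), so that $\{y\ge x\}=x+\{y\ge 0\}$, and grades hyperedges by their least point $x$ and the annulus $A^\ge_{x,m}\sm A^\ge_{x,m-1}$ containing their farthest point; the resummed potential $\Psi(\o_{x,m},\oo)=\sum_{\eta\in P^\o_{x,m}}\Phi(\eta,\oo)$ is then translation invariant because the grading is translation covariant, and each vacuum term is attributed exactly once (to its least point), so the representation identity holds as in Theorem~\ref{Representation_Absolute}. The crucial estimate is $|\Psi(\o_{y,m},\oo)|\le 2\k(m-1)$, which comes from the pre-modification identity $\sum_{\eta\subset\o_{A^\ge_{y,m}}:\,l(\eta)=y}\Phi(\eta,\oo)=-\log\bigl[\r_y(\oo_{A^>_{y,m}})/\r_y(\oo_{A^\ge_{y,m}})\bigr]$ --- note the conditioning is only on the points \emph{strictly after} $y$, which is exactly the sequential-insertion structure your $\d_n$ discard. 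The price of a group ordering is that every point has infinitely many predecessors (a half-space of them), and this is where your correct intuition about the hypotheses enters: the bound $\sum_{x\in\o_\L}\sum_m|\o_{A^<_{x,m}\sm A^<_{x,m-1}}|\,\k(m-1)\le|\o_\L|\,C(\o)\sum_m m^d\k(m-1)$ is precisely where $\sum_m m^d\k(m)<\infty$ and the density restriction $\sup_n n^{-d}|\o_{\L_n}|<\infty$ are consumed. So your accounting of what the hypotheses are for is right, but it must be grafted onto an order-based resummation, not a radial one.
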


\section{Potentials for a time-evolved Widom-Rowlinson model}\label{Sec_WRM}
As an illustration we consider the WRM under independent spin flip as presented in \cite{JaKu16}. We start by recalling the model. 
\subsection{The WRM under independent spin flip}
The WRM, as initially proposed in \cite{WiRo70}, is a hard-core repulsion model with single spin space $E=\{+,-\}$ and $\OO$-Poisson-modification given by
$$\chi_\L(\oo)=\one\{\text{for all }x,y\in \o\text{ with }|x-y|<2r:\,\s_x=\s_y\}.$$
Alternatively, it can be described via the vacuum potential 
\begin{equation*}
\Phi(\eta,\oo)=\infty\times \one_{|x-y|< 2r}\one_{\eta=\{x,y\}}\one_{\s_x\neq\s_y}.
\end{equation*}
Note that the interaction is of range $2r>0$. 
The underlying PPP is given by the superposition of two PPP with spatially homogeneous intensities $\l_+\ge\l_->0$, in particular, the model is translation invariant. It is well known, see for example \cite{ChChKo95,Ru71,GiLeMa95}, that the symmetric WRM with $\l_+=\l_-$ exhibits a phase-transition in the high-intensity regime. Writing $\OO^{\rm f}=\{\oo\in\OO:\, \o \text{ has no infinite cluster}\}$, high-intensity here means that for large enough $\l_++\l_-$, the WRM is concentrated on $\OO\sm\OO^{\rm f}$. We speak of low-intensity if the WRM is concentrated on $\OO^{\rm f}$. A cluster $C\subset\o$ is a maximally connected subset of $\o$, i.e., a set $C$ such set for all $x,y\in C$ there exists a set of points $\{z_1,\dots,z_{n}\}\subset C$ with $|x-z_{1}|<2r$, $|z_n-y|<2r$ and $|z_i-z_{i+1}|<2r$ for all $1\le i\le n$. For the purpose of the present overview we focus only on the maximal measure $\mu^+$ obtained as a monotone limit of specifications with full plus boundary conditions along an exhausting volume sequence, also in regimes of non-uniqueness of the Gibbs measure.

\medskip
The dynamics is given by rate-one Poisson flips independently attached to every particle, i.e., the probability, that a site in the plus-state, is in the plus-state at time $t\ge0$ is given by 
$$p_t(+,+)=\tfrac{1}{2}(1+e^{-2t})$$
with $p_t(+,-)$, $p_t(-,-)$ and $p_t(-,+)$ defined accordingly. The main findings of \cite{JaKu16} are that, depending on asymmetry of the WRM and time, there is a sharp Gibbs-non-Gibbs transition in the sense that the time-evolved WRM can be described as a Gibbs measure for a quasilocal $\OO$-Poisson modification (respectively $\OO^{\rm f}$-Poisson modification) $\r$ or not. Quasilocality here is defined as continuity w.r.t.~the $\t$-topology, which is defined via convergence tested on all measurable local functions, and not just continuous functions.  Focussing on the asymmetric case $\l_+>\l_-$ with initial WRM being in the plus extremal state, 
the critical time $t_G$ is given by the unique positive solution of 
$$b:=\frac{\l_-p_t(+,+)}{\l_+p_t(+,-)}=1.$$
A set of configurations where discontinuities of finite-volume conditional probabilities of the time-evolved measure can not appear at the critical time $t_G$, can be defined by $\OO^+=\{\oo\in\OO:\, \o \text{ has no infinite cluster }C\text{ with } \liminf_{n\uparrow\infty}|C\cap \L_n|^{-1}\sum_{x\in C\cap \L_n}\s_x\le 0\}$. 
In Table~\ref{table1} we summarize the results for the asymmetric model, which is the most interesting case; see also~\cite{DeHo18} for recent results on the corresponding equilibrium model. In the table, when we write ``no quasilocal Poisson modification'' we mean that there exists no $\OO'\subset\OO$ such that the time-evolved WRM would be concentrated on $\OO'$ and there exists a quasilocal $\OO'$-Poisson modification. In all other cases, the quasilocal Poisson modification can be constructed explicitly and will be introduced in the following section.
\makegapedcells
\begin{table}[h!] 
  \centering
  \caption{Quasilocality (ql) transitions of Poisson-modifications for the time-evolved asymmetric WRM.}
\label{table1}
\scalebox{1}{
  \begin{tabular}{cV{3}c|c}
 time & high intensity & low intensity\\
\hlineB{5}
$0<t< t_G$  & no ql Poisson modification & ql $\OO^{\rm f}$-Poisson modification\\ \cline{1-3}
$t=t_G$  & ql $\OO^+$-Poisson modification & ql $\OO^{\rm f}$-Poisson modification\\ \cline{1-3}
$t_G<t\le\infty$  & ql $\OO$-Poisson modification& ql $\OO$-Poisson modification\\
\end{tabular}
}
\end{table}

\subsection{Uniformly finite-horizon potentials for the time-evolved WRM}
It is one of the nice features of the time-evolved WRM that Poisson modifications can be explicitly constructed. We now use the time-evolved two-color PPP as the a-priori measure, in other words, the underlying point process $\PP$ is now given by the superposition of two PPP with spatially homogeneous intensity measure
\begin{equation*}
\begin{split}
F(\{+\})=\l_+p_t(+,+)+\l_-p_t(-,+)\cr
F(\{-\})=\l_+p_t(+,-)+\l_-p_t(-,-).
\end{split}
\end{equation*}
The $\OO$-Poisson modification (respectively $\OO^{\rm f}$-Poisson modification, $\OO^+$-Poisson modification) $\r$ is given by 
\begin{equation*}
\begin{split}
\r_\L(\oo_\L\oo_{\L^c})=h_\L(\oo_\L\oo_{\L^c})/\PP_\L (h_\L)(\oo_{\L^c})
\end{split}
\end{equation*}
where
\begin{equation*}
\begin{split}
h_\L(\oo)&=
\frac{1}{(1+a)^{|\oo_{\L}|^+}(1+b)^{|\oo_{\L}|^-}}\prod_{C\in\CC^{\rm f}_\L(\o)}(1+a^{|\oo_C|^+}b^{|\oo_C|^-}).
\end{split}
\end{equation*}
Here we used the following notation: $a=\l_-p_t(+,-)/(\l_+p_t(+,+))$; $|\oo_C|^\pm$ denotes the number of plus (respectively minus) spins in $\oo_C$; $\CC^{\rm f}_\L(\o)$ (respectively $\CC^{\infty}_\L(\o)$) denotes the set of clusters in $\o$ with nonempty intersection with the volume $\L$ and which are finite (respectively infinite). Let us note that in the original presentation of the model in~\cite{JaKu16}, we do not introduce a representation via Poisson modifications with respect to the two-color Poisson process $\PP$, like we do in the present setup. This might make it difficult for the reader to make the connection, but it is a straight-forward (though cumbersome) computation to derive the present representation. Let us mention here at least that in~\cite{JaKu16} we use the notation $\r(\oo_C)=a^{|\oo_C|^+}b^{|\oo_C|^-}$. 

\medskip
In order to arrive at a potential representation for $\r$, we write $\CC^{\rm f}(\o)$ for the set of all finite clusters in $\o$ and compute
\begin{equation*}
\begin{split}
\log h_\L(\oo_\L)=\sum_{C\in\CC^{\rm f}(\o_\L)}\log(1+a^{|\oo_C|^+}b^{|\oo_C|^-})-|\oo_\L|^+\log(1+a)-|\oo_\L|^-\log(1+b).
\end{split}
\end{equation*}
Note that the second and third summand on the r.h.s.~form single-site potentials which can be considered as part of the a-priori measure by incorporating them into the mark distribution $F$. Only the first summand describes interactions.
Hence, we can define a potential
$$\Psi(\eta,\oo)=\log(1+a^{|\oo_{\eta}|^+}b^{|\oo_{\eta}|^-})\one_{\eta\in\CC^{\rm f}(\o)}$$
which is $2r$-uniformly finite-horizon potential is the sense of Definition~\ref{Uniform finite-horizon}. To see this, note that $\Psi$ assigns an interaction energy to finite clusters of $\o$. In order to decide wether a subset $\eta\Subset\o$ is a cluster, it suffices to know $\o_{B_{2r}(\eta)}$. Note that $a<1$ by definition. In the low-intensity regime $\PP(\OO^{\rm f})=1$ and the number of clusters attached to any finite volume is finite. Hence the $2r$-uniformly finite-horizon Hamiltonian defined via $\Psi$ exists in the domain $\OO^{\rm f}$. Existence is also guaranteed at the critical time on the domain $\OO^+$ since then $b=1$ and $\Psi$ decays exponentially as the cluster size grows unless the number of plus spins is macroscopically vanishing, but $\PP(\OO^+)=1$. At supercritical times and high-intensities, also $b<1$ and hence we have exponential decay. Finally, we note that $\Psi$ is defined on the hypergraph structure $\EE_\CC=\{(\eta,\oo)\in\bar\EE:\, \eta=\CC^{\rm f}(\o)\}$ of finite clusters.

\subsection{The vacuum potential for the time-evolved WRM}
In the following we derive the vacuum potential representation and investigate its decay properties. Using the definition in equation \eqref{VacPot} we have 
\begin{equation*}
\begin{split}
\Phi({\eta},\oo)&=- \sum_{\xi\subset\eta} (-1)^{|\eta\sm\xi|}\log\frac{\r_\L(\oo_{\xi})}{\r_\L(\es_\L)}=- \sum_{\xi\subset\eta} (-1)^{|\eta\sm\xi|}\sum_{C\in\CC(\xi)}\Psi(C,\oo).
\end{split}
\end{equation*}
\begin{lem}\label{VacPotNonZero}
The vacuum potential $\Phi({\eta},\oo)$ is non-zero only if $\eta$ is a cluster.
\end{lem}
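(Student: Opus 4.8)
The plan is to read the statement off the Möbius (inclusion--exclusion) structure of the second displayed formula for $\Phi$ preceding the lemma. Setting $g(\xi):=\sum_{C\in\CC(\xi)}\Psi(C,\oo)$ for the total cluster energy of the subconfiguration $\xi$, that formula reads
$$\Phi(\eta,\oo)=-\sum_{\xi\subset\eta}(-1)^{|\eta\sm\xi|}g(\xi).$$
I would argue by contraposition: assume $\eta$ is \emph{not} a single cluster, i.e.\ $\eta$ is disconnected for the relation $x\sim y\Leftrightarrow|x-y|<2r$, and decompose $\eta=\eta_1\sqcup\dots\sqcup\eta_k$ into its connected components, with $k\ge2$ and each $\eta_i\neq\emptyset$. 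The goal is then to show $\Phi(\eta,\oo)=0$.

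The key structural step is that clustering respects this decomposition. For any $\xi\subset\eta$ no cluster of $\xi$ can meet two distinct blocks $\eta_i$, since a connecting chain inside $\xi$ would also connect the corresponding points inside $\eta$, contradicting that the $\eta_i$ are separate components; hence $\CC(\xi)=\bigsqcup_{i=1}^k\CC(\xi\cap\eta_i)$. As $\Psi(C,\oo)$ depends on $\oo$ only through the marks $\oo_C$ on the cluster itself, the energy splits additively, $g(\xi)=\sum_{i=1}^k g_i(\xi\cap\eta_i)$ with $g_i(\zeta):=\sum_{C\in\CC(\zeta)}\Psi(C,\oo)$. Inserting this into the Möbius sum and factorizing over the product of power sets — using $|\eta\sm\xi|=\sum_j|\eta_j\sm(\xi\cap\eta_j)|$ — each term $g_i(\xi\cap\eta_i)$ yields
$$\Big(\sum_{\zeta\subset\eta_i}(-1)^{|\eta_i\sm\zeta|}g_i(\zeta)\Big)\prod_{j\neq i}\Big(\sum_{\zeta\subset\eta_j}(-1)^{|\eta_j\sm\zeta|}\Big),$$
and every factor with $j\neq i$ equals $(1-1)^{|\eta_j|}=0$ because $\eta_j\neq\emptyset$. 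Since $k\ge2$, each $i$ carries at least one such vanishing factor, so the whole sum, and therefore $\Phi(\eta,\oo)$, is zero.

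The binomial factorization in the last step is routine. The point that requires care is the structural claim that clusters never bridge two components of $\eta$, combined with the localization of $\Psi$ to its own cluster: it is exactly this localization — a consequence of the $2r$-uniform finite-horizon property of $\Psi$ — that makes $g$ a genuine sum of functions of the separate blocks $\xi\cap\eta_i$, which is what allows the alternating sum to factorize and vanish. I expect this localization and decomposition bookkeeping, rather than the algebra, to be the main thing to get right.
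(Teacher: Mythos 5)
Your proof is correct and takes essentially the same route as the paper's: the paper likewise decomposes $\eta$ into (two) nonempty clusters, uses that no cluster of a subconfiguration $\xi\subset\eta$ can bridge distinct components so that the cluster energy splits additively, and then annihilates each term with the vanishing alternating sum $\sum_{\zeta\subset\eta_j}(-1)^{|\eta_j\setminus\zeta|}=0$ over a nonempty block. Your treatment of general $k\ge 2$ components is only a mild (and somewhat more carefully written) generalization of the paper's two-cluster computation.
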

In particular $\Phi$ is again defined on $\EE_\CC$ and if $\Phi({\eta},\oo)\neq0$ then
\begin{equation*}
\begin{split}
\Phi({\eta},\oo)&=- \sum_{\xi\subset\eta} (-1)^{|\eta\sm\xi|}\Psi(\xi, \oo).
\end{split}
\end{equation*}
Note that clusters can become infinitely long with positive probability in the high intensity regime, for details see \cite{JaKu16}. Further, note that spatial positioning inside clusters do not play any r\^ole in $\Psi(\eta,\oo)$. Hence, for nonzero $\Phi$, we can write
\begin{equation*}
\begin{split}
\Phi({\eta},\oo)&=- \sum_{k=0}^{|\oo_\eta|^+} \binom{|\oo_\eta|^+}{k}(-1)^{|\oo_\eta|^+-k}\sum_{l=0}^{|\oo_\eta|^-} \binom{|\oo_\eta|^-}{l}(-1)^{|\oo_\eta|^--l}\k(k,l)
\end{split}
\end{equation*}
where $\k(k,l)=\log(1+a^kb^l)$.
Expanding the logarithm yields, 
\begin{equation*}
\begin{split}
\Phi({\eta},\oo)&=(-1)^{|\eta|+1}\sum_{j=1}^\infty(-1)^j\frac{1}{j}(1-a^j)^{|\oo_\eta|^+}(1-b^j)^{|\oo_\eta|^-}.
\end{split}
\end{equation*}
The vacuum potential is expected to converge slowly and is not absolutely summable. Let us conclude the discussion by the following (non-optimal) upper bound for the critical-time case where $\Phi({\eta},\oo)=0$ if $|\oo_\eta|^->0$ and hence $\Phi$ is given by
$$\phi(n)=(-1)^{n+1}\sum_{j=1}^{\infty} (-1)^{j}\frac{1}{j}(1-\a^{j})^{n}\qquad\text{with }\a=(\l_-/\l_+)^2<1.$$
\begin{lem}\label{AsymptoticPot}
We have that $\limsup_{n\uparrow\infty}|\phi(n)|\log n\le C$ for some $C>0$.
\end{lem}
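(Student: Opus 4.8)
The plan is to reduce the statement to a purely analytic estimate on the scalar sum
$S(n):=\sum_{j\ge1}(-1)^j j^{-1}(1-\a^j)^n$, and to prove $|S(n)|\le C/\log n$ for all large $n$; since $|\phi(n)|=|S(n)|$ this gives the conclusion with the same constant $C$. I would write $a_j:=(1-\a^j)^n\in[0,1]$, which increases in $j$ from $(1-\a)^n$ to $1$, and set $c:=-\log\a>0$. The qualitative picture driving the proof is a sharp crossover at $j^{*}:=\log n/c$, where $\a^{j^{*}}=1/n$: for $j\ll j^{*}$ one has $n\a^j\gg1$ and $a_j$ is super-exponentially small, whereas for $j\gg j^{*}$ one has $a_j\approx1$. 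The main obstacle, and the point where the argument must be delicate, is that $S(n)$ cannot be bounded termwise in absolute value: for $j>j^{*}$ the weights $a_j/j$ behave like $1/j$, whose sum diverges, so one is forced to exploit the cancellation provided by the alternating sign. Accordingly I would split $S(n)=S_{\rm I}(n)+S_{\rm II}(n)$ at $J:=\lceil\log n/c\rceil$ (so that $n\a^J\in(\a,1]$) and treat the head and tail by different mechanisms.

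For the tail $S_{\rm II}(n)=\sum_{j>J}(-1)^j j^{-1}a_j$ I would write $a_j=1-r_j$ with $r_j:=1-(1-\a^j)^n\in[0,1]$, giving $S_{\rm II}(n)=\sum_{j>J}(-1)^j j^{-1}-\sum_{j>J}(-1)^j j^{-1}r_j$. The first sum is a tail of the alternating harmonic series, hence bounded in modulus by its first omitted term and so $O(1/J)=O(1/\log n)$; this is exactly where the sign cancellation enters. For the second sum the elementary inequality $r_j=1-(1-\a^j)^n\le n\a^j$ allows an absolute bound $\sum_{j>J}j^{-1}n\a^j\le (n/J)\sum_{j>J}\a^j=(n/J)\a^{J+1}/(1-\a)$, and since $n\a^{J}\le1$ by the choice of $J$ this is again $O(1/\log n)$.

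For the head $S_{\rm I}(n)=\sum_{1\le j\le J}(-1)^j j^{-1}a_j$ a crude absolute bound suffices, because $a_j$ is tiny well below the crossover. From $(1-\a^j)^n\le e^{-n\a^j}$ and $n\a^j=(n\a^J)\,\a^{j-J}\ge e^{-c}\,e^{c(J-j)}$, the substitution $s=J-j$ yields $|S_{\rm I}(n)|\le\sum_{s=0}^{J-1}(J-s)^{-1}\exp\!\big(-e^{c(s-1)}\big)$. Splitting at $s=J/2$: for $s\le J/2$ the weight $(J-s)^{-1}\le 2/J$ factors out and $\sum_{s\ge0}\exp(-e^{c(s-1)})<\infty$ converges since $c>0$, giving $O(1/J)$; for $s>J/2$ one has $\exp(-e^{c(s-1)})\le\exp(-e^{-c}n^{1/2})$, so that part is negligible. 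Hence $|S_{\rm I}(n)|=O(1/\log n)$, and combining the two pieces gives $|\phi(n)|=|S(n)|\le C/\log n$ for large $n$, whence $\limsup_{n\uparrow\infty}|\phi(n)|\log n\le C$. The only genuinely delicate step is the tail, where the alternating cancellation is essential; everything else reduces to the inequalities $(1-x)^n\le e^{-nx}$ and $1-(1-x)^n\le nx$ for $x\in[0,1]$, together with the fast decay of $a_j$ below the crossover.
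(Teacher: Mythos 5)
Your proof is correct, and it takes a genuinely different route from the paper's. Both arguments rest on the same two pillars---smallness of $(1-\alpha^j)^n$ below the crossover scale $J\sim\log n/\log(1/\alpha)$, and alternating cancellation beyond it---but they extract the cancellation by different mechanisms. The paper does not split the sum at the outset: it first pairs consecutive terms (odd $j$ with $j+1$), which converts the conditionally convergent alternating series into sums carrying the absolutely summable weights $\frac{1}{j(j+1)}$ plus a telescoping part bounded by $(1-\alpha)^n$; only then does it split at $J$ and optimize, choosing $n\alpha^J=n^{\varepsilon}$, which yields the tracked constant $C>2\log(1/\alpha)$ (the paper writes $a$ for your $\alpha$). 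You instead split first at $J=\lceil\log n/c\rceil$, bound the head termwise by the double-exponential estimate $(1-\alpha^j)^n\le e^{-n\alpha^j}\le\exp\bigl(-e^{c(J-j-1)}\bigr)$, and in the tail compare with the pure alternating harmonic series, whose tail is $O(1/J)$ by the alternating series bound, controlling the discrepancy absolutely via $1-(1-\alpha^j)^n\le n\alpha^j$ together with $n\alpha^J\le 1$. The paper's pairing buys uniformity (cancellation is encoded once and for all in the weights $1/(j(j+1))$, so one splitting handles everything) and a cleaner, sharper constant; your comparison-with-the-limit argument buys a more transparent crossover picture, yields convergence of the series defining $\phi(n)$ as a byproduct (both pieces of your tail decomposition converge, one conditionally, one absolutely), and still produces an explicit if less sharp constant, e.g.\ $C=\log(1/\alpha)\bigl(\frac{1}{1-\alpha}+2\sum_{s\ge0}\exp(-e^{c(s-1)})\bigr)$. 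Since the lemma asks only for some finite $C>0$, both routes are equally adequate.
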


\subsection{Existence of absolutely-summable potentials} The time-evolved WRM in the Gibbsian regime $t>t_G$ is uniformly quasilocal in the $\t$-topology and hence also uniformly vacuum quasilocal, see \cite{JaKu16}. The next result shows in particular, that it is also vacuum uniformly log-quasilocal. 
\begin{prop}\label{AbsSumWRM}
In the WRM under independent spin flip in the regime where the associated $\OO$-pre-modification $\r$ is quasilocal, $\r$ is even uniformly log-quasilocal. 
\end{prop}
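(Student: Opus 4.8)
The plan is to fix a finite volume $\L\Subset\R^d$ and show that $\r_\L$ is vacuum uniformly log-quasilocal, i.e.\ that $\sup_{\oo\in\OO}|\log\r_\L(\oo)-\log\r_\L(\oo_V)|\to0$ as the truncation volume $V\uparrow\R^d$. I would work in the Gibbsian regime $t>t_G$, where the domain is $\OO$ and both parameters satisfy $a,b<1$; set $q:=\max\{a,b\}<1$. Since $\r_\L=h_\L/Z_\L$ with $Z_\L(\oo_{\L^c})=\PP_\L(h_\L)(\oo_{\L^c})$, for $V\supseteq\L$ I would write
$$\log\r_\L(\oo)-\log\r_\L(\oo_V)=\big[\log h_\L(\oo)-\log h_\L(\oo_V)\big]-\big[\log Z_\L(\oo_{\L^c})-\log Z_\L(\oo_{V\sm\L})\big].$$
The first step is to observe that the single-site factors $(1+a)^{-|\oo_\L|^+}(1+b)^{-|\oo_\L|^-}$ in $h_\L$ depend only on $\oo_\L$ and hence cancel in the bracket $D_h(\oo):=\log h_\L(\oo)-\log h_\L(\oo_V)$, which therefore reduces to the difference of the cluster sums $\sum_{C}\Psi(C,\cdot)$ for $\o$ and for $\o_V$. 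Moreover, writing each $Z_\L$ as an integral over the inner configuration $\zeta$ and noting that $|\log h_\L(\zeta\,\oo_{\L^c})-\log h_\L(\zeta\,\oo_{V\sm\L})|=|D_h(\zeta\,\oo_{\L^c})|$, a pointwise ratio bound by $e^{\pm\sup_{\oo'}|D_h(\oo')|}$ passes to the ratio of the two partition functions. Hence the partition-function bracket is also dominated by $\sup_{\oo}|D_h(\oo)|$, and it suffices to prove $\sup_{\oo\in\OO}|D_h(\oo)|\to0$.

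Next I would reorganize $D_h(\oo)$ as a sum over the clusters $C$ of $\o$ meeting $\L$, grouping each cluster $C'$ of $\o_V$ meeting $\L$ under the unique cluster $C\supseteq C'$ of $\o$ containing it. A cluster $C$ that is finite and already contained in $V$ contributes nothing, being unchanged by truncation. The surviving terms come only from clusters $C$ meeting $\L$ that reach $V^{\rm c}$ or are infinite, with contribution $\Psi(C)\one_{\{C\text{ finite}\}}-\sum_{C'}\Psi(C')$ over the components $C'$ of $C\cap V$ meeting $\L$. Using $\Psi(C,\oo)=\log(1+a^{|\oo_C|^+}b^{|\oo_C|^-})\le q^{|C|}$, each contribution is controlled by $q$ to the sizes of the surviving clusters and pieces. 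If $V\supseteq B_R(\L)$, then every surviving $C$ contains a $2r$-chain from $\L$ to distance $>R$, and every surviving piece $C'$ contains one to distance $\ge R-2r$: a piece that failed to reach that far would, by maximality, already be a full and hence unchanged cluster of $\o$. Thus each surviving cluster or piece has at least of order $R/r$ points and contributes at most $q^{cR/r}$ for a constant $c>0$.

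The main obstacle is that a priori the \emph{number} of surviving clusters and pieces is not bounded, since the Poisson configuration may carry arbitrarily many points near $\L$. This is where I would use a geometric packing argument. Treating the two families separately: the surviving clusters of $\o$ form a pairwise $2r$-separated family, and likewise the surviving pieces form a pairwise $2r$-separated family of clusters of $\o_V$; each member meets $\L$ and reaches distance at least $R-2r$, so each contains a point in the \emph{fixed} shell $\{x:\,\dist(x,\L)\in[2r,4r]\}$. Within one family these shell points are pairwise at distance at least $2r$ and lie in a region of bounded volume depending only on $\L$ and $r$, so the number of members in each family is at most a constant $C(\L,r)$, \emph{uniformly in} $\oo$ and $V$. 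Combining this with the per-member bound gives $\sup_{\oo\in\OO}|D_h(\oo)|\le 2C(\L,r)\,q^{cR/r}$, which tends to $0$ as $V\uparrow\R^d$; feeding it into the reduction of the first paragraph yields $\sup_{\oo}|\log\r_\L(\oo)-\log\r_\L(\oo_V)|\le 4C(\L,r)\,q^{cR/r}\to0$, proving the claim. The remaining quasilocal regimes on $\OO^{\rm f}$ are only easier, since there every configuration has finitely many clusters meeting $\L$ and no infinite cluster.
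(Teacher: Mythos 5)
Your proof is correct and follows essentially the same route as the paper's: reduce to the uniform difference $\sup_{\oo}|\log h_\L(\oo)-\log h_\L(\oo_V)|$, observe that only clusters reaching from $\L$ to near $V^{\rm c}$ fail to cancel and hence carry at least order $d(\L,V^{\rm c})/r$ points, bound the number of such clusters attached to $\L$ uniformly in $\oo$ by a $2r$-separation packing argument, and conclude from $a,b<1$. Your write-up merely makes explicit two steps the paper leaves implicit, namely the passage from $h_\L$ to $\r_\L=h_\L/Z_\L$ via the partition-function ratio, and the counting of clusters (you pack representative points in a shell around $\L$, the paper packs them inside $\L$ itself), so it is the same argument in greater detail.
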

As a consequence of Theorem~\ref{Representation_Absolute} the time-evolved WRM in the corresponding regime can thus be written as a Gibbs measures w.r.t.~an absolutely-summable potential. The result of Proposition~\ref{AbsSumWRM} also holds at the critical time $t_G$, but we do not prove it here. However, in this case, already the vacuum potential is absolutely summable. 

\subsection{Existence of absolutely-summable translation-invariant potentials} The time-evolved WRM is translation-invariant. In the Gibbsian regime $t>t_G$, it enjoys a representation by a $\OO$-pre-modification which is even summable-vacuum-uniformly log-quasilocal. 
\begin{prop}\label{AbsSumTransWRM}
In the WRM under independent spin flip in the regime where the associated $\OO$-pre-modification $\r$ is quasilocal, $\r_0$ is summable-vacuum-uniformly log-quasilocal. 
\end{prop}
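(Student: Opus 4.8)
The plan is to sharpen the qualitative quasilocality of Proposition~\ref{AbsSumWRM} into an explicit exponential decay rate for the modulus
$$\k(n)=\sup_{\oo\in\OO}\big|\log\r_0(\oo)-\log\r_0(\oo_{\L_n})\big|.$$
Since an exponential bound $\k(n)\le C\g^n$ with $\g<1$ dominates the polynomial weight $n^d$, it yields $\sum_{n}n^d\k(n)<\infty$ at once, and Theorem~\ref{Representation_Absolute_2} then produces the desired translation-invariant absolutely-summable potential. Throughout I work in the regime $t>t_G$, where one checks from the definitions that $c:=a\vee b<1$, and I take the reference cell $\L_0$ at the origin to be a ball of radius strictly less than $r$. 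This choice is harmless by translation invariance and ensures that \emph{at most one} cluster of any admissible configuration meets $\L_0$, since any two points of $\L_0$ lie within distance $2r$.

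The heart of the argument is a geometric observation. Recall the explicit cluster form of $h_{\L_0}$ from Section~\ref{Sec_WRM}: $\log h_{\L_0}(\oo)$ is a sum of finite-cluster energies $\log(1+a^{|\oo_C|^+}b^{|\oo_C|^-})$ over the finite clusters $C\in\CC^{\rm f}(\o)$ meeting $\L_0$, together with single-site terms that depend only on $\oo_{\L_0}$. The passage $\oo\mapsto\oo_{\L_n}$ can alter a cluster meeting $\L_0$ only if that cluster reaches outside the ball $\L_n$ of radius $n/2$; and any such cluster must contain a connected chain, with consecutive points at distance less than $2r$, running from $\L_0$ to the complement of $\L_n$. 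The portion of this chain lying inside $\L_n$ already spans a radial distance of order $n/2$, hence consists of at least $m_n:=\lfloor n/(4r)\rfloor-1$ points, and this portion is exactly what survives in $\oo_{\L_n}$ while still meeting $\L_0$. Thus both the cluster in $\oo$ and the truncated piece in $\oo_{\L_n}$ carry mass at least $m_n$, so by $\log(1+x)\le x$ their energies are each at most $c^{m_n}$. Because $\L_0$ meets at most one cluster, the single-site terms cancel (they depend only on the unchanged $\oo_{\L_0}$), and infinite clusters contribute $0$ to $h_{\L_0}(\oo)$ while their truncations are finite and long; one obtains the uniform bound $|\log h_{\L_0}(\oo)-\log h_{\L_0}(\oo_{\L_n})|\le 2c^{m_n}$.

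The remaining, and main, point is the normalization, where the supremum over $\oo$ is delicate. Here I would run the same cluster estimate inside the partition function: for every fixed interior configuration $\xi$ on $\L_0$, comparing the boundary conditions $\oo_{\L_0^c}$ and its truncation $\oo_{\L_n\setminus\L_0}$ gives $|\log h_{\L_0}(\xi\oo_{\L_0^c})-\log h_{\L_0}(\xi\oo_{\L_n\setminus\L_0})|\le 2c^{m_n}$, uniformly in $\xi$, since the single cluster of $\xi\oo_{\L_0^c}$ meeting $\L_0$ is governed by the same chain argument. Integrating this two-sided bound against $\PP_{\L_0}$ shows that $Z_{\L_0}(\oo_{\L_0^c})$ and $Z_{\L_0}(\oo_{\L_n\setminus\L_0})$ differ multiplicatively by at most $e^{2c^{m_n}}$, whence $|\log Z_{\L_0}(\oo_{\L_0^c})-\log Z_{\L_0}(\oo_{\L_n\setminus\L_0})|\le 2c^{m_n}$.

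Combining the two estimates gives $\k(n)\le 4c^{m_n}\le C\g^{n}$ with $\g=c^{1/(4r)}<1$, so that $\sum_{n=1}^\infty n^d\k(n)\le C\sum_{n=1}^\infty n^d\g^{n}<\infty$, which is exactly summable-vacuum-uniform log-quasilocality of $\r_0$. I expect the only genuinely delicate step to be the uniform control of the normalization: one must verify that the cluster bound holds for every admissible interior configuration $\xi$ simultaneously, and this is precisely what the smallness of $\L_0$ (at most one relevant cluster) together with the chain argument (forcing that cluster to be long whenever it is affected) delivers.
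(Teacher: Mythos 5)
Your proposal is correct and follows essentially the same route as the paper: the paper's proof simply specializes the last estimate from the proof of Proposition~\ref{AbsSumWRM} (the same chain argument showing that any cluster affected by truncation to $\L_n$ must contain at least of order $n/r$ points, hence contributes at most $\log(1+c^{n/(2r)})$ with $c=a\vee b<1$) to the fixed volume indexed by the origin, and then notes that $\sum_n n^d\log(1+s^n)<\infty$. The only difference is presentational: you re-derive that estimate rather than cite it, and you spell out the transfer of the bound to the normalization $Z_0$, which the paper handles implicitly via the reduction ``it suffices to consider $h$'' in Proposition~\ref{AbsSumWRM}.
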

Thus, on the set $\OO'=\{\oo\in\OO:\, \sup_{n\in\N}n^{-d}|\o_{\L_n}|<\infty\}$, the time-evolved WRM $\mu_t$ can be also represented by an absolutely-summable potential which is translation invariant. But, for any $t>0$, $\mu_t(\OO')=1$ and thus $\rho$, considered as a $\OO'$-pre-modification has a corresponding absolutely-summable translation-invariant potential representation. To see that $\mu_t(\OO')=1$, first note that $\mu_t(\OO')=\mu_0(\OO')=1$ and by Borel-Cantelli, it suffices to verify that $\sum_{n=0}^\infty\mu_0(|\o_{\L_n}|\ge cn^d)<\infty$ for some sufficiently large $c>0$. But, 
\begin{equation*}
\begin{split}
\mu_0(|\o_{\L_n}|\ge cn^d)=\int\mu_0(d\oo)\g^\chi_{\L_n}(|\o_{\L_n}|\ge cn^d|\oo)\le\PP(|\o_{\L_n}|\ge cn^d)\exp(\tfrac{\pi(\l_++\l_-)}{2^d}n^d)
\end{split}
\end{equation*}
and thus, using Chernov bounds for the Poisson random variable $|\o_{\L_n}|$, for sufficiently large $c>0$, we have that $\PP(|\o_{\L_n}|\ge cn^d)\exp(\pi(\l_++\l_-)n^d/2^d)\le \exp(-n^d)$, which is summable in $n$.

\section{Proofs}\label{Proofs}
\begin{proof}[Proof of Lemma~\ref{PM}]
First note that $\PP_\L \r_\L=1$ and in particular for $f\in\FF^b_{\L^c}$ and $\oo_{\L^c}\in\OO^*$ we have $\g^\r_{\L}(f|\oo_{\L^c})=f(\oo_{\L^c})$ and hence $\g^\r$ is proper. 
As for the consistency, note that using the concentration on $\OO^*$ we have that
\begin{equation*}
\begin{split}
&\g^\r_\D(\g^\r_\L(f|\cdot)|\oo_{\D^c})\cr
&=\int\PP_\D(d\oo'_\D)\r_\D(\oo'_\D\oo_{\D^c})\int\PP_\L(d\oo''_\L)\r_\L(\oo''_\L\oo'_{\D\sm\L}\oo_{\D^c})f(\oo''_\L\oo'_{\D\sm\L}\oo_{\D^c})\cr
&=\int\PP_\D(d\oo'_\D)\frac{h_\D(\oo'_\D\oo_{\D^c})}{\PP_\D h_\D(\oo_{\D^c})}\int\PP_\L(d\oo''_\L)\frac{h_\L(\oo''_\L\oo'_{\D\sm\L}\oo_{\D^c})}{\PP_\L h_\L(\oo'_{\D\sm\L}\oo_{\D^c})}f(\oo''_\L\oo'_{\D\sm\L}\oo_{\D^c})\cr
&=\int\PP_\D(d\oo'_\D)\frac{h_\L(\oo'_\D\oo_{\D^c})}{\PP_\D h_\D(\oo_{\D^c})}\int\PP_\L(d\oo''_\L)\frac{h_\D(\oo''_\L\oo'_{\D\sm\L}\oo_{\D^c})}{\PP_\L h_\L(\oo'_{\D\sm\L}\oo_{\D^c})}f(\oo''_\L\oo'_{\D\sm\L}\oo_{\D^c})\cr
&=\int\PP_{\D\sm\L}(d\oo'_\D)\int\PP_\L(d\oo''_\L)\frac{h_\D(\oo''_\L\oo'_{\D\sm\L}\oo_{\D^c})}{\PP_\D h_\D(\oo_{\D^c})}f(\oo''_\L\oo'_{\D\sm\L}\oo_{\D^c})=\g^\r_\D(f|\oo_{\D^c})
\end{split}
\end{equation*}
where we used the pre-modification property in the forth line.

\medskip
For the converse statement, note that by the above calculations, the equation 
\begin{equation*}
\begin{split}
\r_\D(\oo_\D\oo_{\D^c})=\r_\L(\oo_\D\oo_{\D^c})\int\PP_\L(d\oo'_\L)\r_\D(\oo'_\L\oo_{\D\sm\L}\oo_{\D^c})
\end{split}
\end{equation*}
must be satisfied for $\PP_\D$ almost all $\oo_\D$. Thereby, using the above equation twice,
\begin{equation*}
\begin{split}
\r_\D(\oo_\D\oo_{\D^c})\r_\L(\oo''_\L\oo_{\D\sm\L}\oo_{\D^c})&=\r_\L(\oo_\D\oo_{\D^c})\r_\L(\oo''_\L\oo_{\D\sm\L}\oo_{\D^c})\int\PP_\L(d\oo'_\L)\r_\D(\oo'_\L\oo_{\D\sm\L}\oo_{\D^c})\cr
&=\r_\L(\oo_\D\oo_{\D^c})\r_\D(\oo''_\L\oo_{\D\sm\L}\oo_{\D^c})\cr
\end{split}
\end{equation*}
for $\PP_\L\times\PP_\L$ almost all $\oo_\L,\oo''_{\L}$, which concludes the proof.
\end{proof}

\begin{proof}[Proof of Theorem~\ref{Representation}] We prove in several steps similar to \cite[Theorem 2.30]{Ge11} and \cite[Theorem 1]{Ko76}. 
We claim, that the potential is given by 
\begin{equation}\label{VacPot}
\begin{split}
\Phi(\eta,\oo)=
\begin{cases} 
- \sum_{\xi\subset\eta} (-1)^{|\eta\sm\xi|}\log\frac{\r_\L(\oo_{\xi})}{\r_\L(\es_\L)} &\mbox{if } \r_\L(\oo_\eta)>0 \\ 
+\infty & \mbox{if } \r_\L(\oo_\eta)=0 
\end{cases} 
\end{split}
\end{equation}
where the definition is independent of $\L$ as long as $\eta\subset\L$. 

\medskip
{\bf Step 1:} Note first, that by vacuum positivity $\r_\L(\es_\L)>0$. Further, for $\xi\subsetneq\eta\subset\L$ there exists $\D\subset\L$ such that with $\eta\sm \xi\subset\D\subset\L$ and $\xi\cap\D=\es$. Then, by the vacuum positivity assumption $\r_\D(\oo_{\xi})>0$ and the pre-modification property 
\begin{equation*}\label{PosErb}
\begin{split}
\r_\D(\oo_\eta)\r_\L(\oo_{\xi})=\r_\D(\oo_{\xi})\r_\L(\oo_\eta)
\end{split}
\end{equation*}
we thus have that $\r_\L(\oo_\eta)>0$ implies $\r_\L(\oo_{\xi})>0$ and hence $\Phi$ is well-defined.

\medskip
{\bf Step 2:} The potential $\Phi$ has the following properties.

(1) $\Phi(\eta,\cdot)$ is $\FF_\eta$ measurable since the evaluation is only w.r.t.~$\eta$.

(2) By the inclusion-exclusion principle we have 
$$\log\frac{\r_\L(\oo_\L)}{\r_\L(\es_\L)}=-\sum_{\eta\subset\o_\L}\Phi({\eta},\oo).$$

(3) $\Phi$ is vacuum normalized in the sense that for all $\xi\subsetneq\eta$
$\Phi(\eta,\oo_{\xi})=0$. Indeed let $\xi\subsetneq\eta$, then 
\begin{equation*}
\begin{split}
-\Phi(\eta,\oo_\xi)&=\sum_{\zeta\subset\xi}\sum_{\zeta'\subset\eta\sm\xi}(-1)^{|\eta\sm(\zeta\cup\zeta')|}\log\frac{\r_\L(\oo_\zeta)}{\r_\L(\es_\L)}\cr
&=\sum_{\zeta\subset\xi}(-1)^{|\xi\sm\zeta|}\log\frac{\r_\L(\oo_\zeta)}{\r_\L(\es_\L)}\sum_{\zeta'\subset\eta\sm\xi}(-1)^{|(\eta\sm\xi)\sm\zeta'|}
\end{split}
\end{equation*}
which is zero since $\sum_{\zeta'\subset\eta\sm\xi}(-1)^{|(\eta\sm\xi)\sm\zeta'|}=0$. 

\medskip
{\bf Step 3:} Next we show that the definition of $\Phi$ is independent of the volume $\L$ via the pre-modification property of $\r_\L$. For this, let $\es\neq\eta\subset\L'\subset\L$, then 
\begin{equation*}
\begin{split}
\sum_{\xi\subset\eta} (-1)^{|\eta\sm\xi|}\log\frac{\r_{\L}(\oo_{\xi})}{\r_{\L'}(\oo_{\xi})}=\log\frac{\r_{\L}(\es)}{\r_{\L'}(\es)}\sum_{\xi\subset\eta} (-1)^{|\eta\sm\xi|}=0.
\end{split}
\end{equation*}
Together with (3), this shows that $\Phi$ is a vacuum potential.

\medskip
{\bf Step 4:} For the existence of the Hamiltonian, note that formally
\begin{equation*}
\begin{split}
H_{\L,\D}(\oo)&=\sum_{\es\neq\eta\subset\o_\D}\Phi({\eta},\oo)-\sum_{\es\neq\eta\subset\o_{\D\setminus\L}}\Phi({\eta},\oo)\cr
&=\log\frac{\r_{\D\sm\L}(\oo_{\D\sm\L})}{\r_{\D\sm\L}(\es_{\D\sm\L})}-\log\frac{\r_{\D}(\oo_{\D})}{\r_{\D}(\es_\D)}=\log\frac{\r_{\D}(\oo_{\D\sm\L})}{\r_\D(\oo_\L\oo_{\D\sm\L})}=-\log\frac{\r_\L(\oo_\L\oo_{\D\sm\L})}{\r_{\L}(\oo_{\D\sm\L})}
\end{split}
\end{equation*}
where we used the pre-modification property twice. By vacuum positivity $\r_{\L}(\oo_{\D\sm\L})>0$ and thus $H_{\L,\D}(\oo)$ is well defined. Now, by assumption of vacuum quasilocality, as $\D$ tends to $\R^d$, we have 
$$H_\L(\oo_\L\oo_{\L^c})=-\log\frac{\r_\L(\oo_\L\oo_{\L^c})}{\r_{\L}(\oo_{\L^c})}.$$ 
Moreover if $\r_\L$ is vacuum uniformly log-quasilocal, it is in particular positive and we have 
\begin{equation*}
\begin{split}
\sup_{\oo\in\OO^*}&|H_{\L,\D}(\oo)-\log\frac{\r_{\L}(\oo_{\L^c})}{\r_\L(\oo)}|\le \sup_{\oo\in\OO^*}(|\log\frac{\r_{\L}(\oo_{\D\sm\L})}{\r_\L(\oo_{\D\sm\L}\oo_{\D^c})}|+|\log\frac{\r_{\L}(\oo_{\D})}{\r_\L(\oo_\D\oo_{\D^c})}|)
\end{split}
\end{equation*}
which tends to zero as $\D$ tends to $\R^d$. 

\medskip
{\bf Step 5:} Note that $h^\Phi_\L(\oo)=\exp(-H_\L(\oo))=\r_\L(\oo)/\r_\L(\oo_{\L^c})$ and the normalization is given by $\int\PP_\L(d\oo_\L) h^\Phi_\L(\oo_\L\oo_{\L^c})=1/\r_\L(\oo_{\L^c})$. Hence, $\r^\Phi=\r$.

\medskip
{\bf Step 6:} Finally, for the uniqueness, let $\Phi'$ be another vacuum potential with $\r^\Phi=\r^{\Phi'}$. Then, $\Phi'-\Phi$ is again a vacuum potential which is equivalent to zero in the sense that 
\begin{equation*}
\begin{split}
H^{\Phi'-\Phi}_\L=\log(h_\L^{\Phi'}/h_\L^\Phi)=\log(Z^{\Phi'}_\L/Z^\Phi_\L)
\end{split}
\end{equation*}
is measurable w.r.t.~$\FF_{\L^c}$. Then, it suffices to show that $\Psi=\Phi'-\Phi=0$. But for all $\L\Subset\R^d$ by the inclusion-exclusion principle,
$$\Psi(\eta,\oo)=\sum_{\es\neq\xi\subset\eta}(-1)^{|\eta\sm\xi|}H^{\Psi}_\L(\oo_\xi)=H^{\Psi}_\L(\es)\sum_{\es\neq\xi\subset\eta}(-1)^{|\eta\sm\xi|}=0$$
where we used the normalization in the last equation.
\end{proof}

\begin{proof}[Proof of Corollary~\ref{Representation_Range}]
Let $\eta$ be such that there exist $x,y\in\eta$ with $|x-y|=s>r$. Denote $\eta'=\eta\sm\{x,y\}$ and $B_r(x)$ the open ball with radius $r$ centered at $x\in\R^d$. Then, using the pre-modification property, we have 
\begin{equation*}
\begin{split}
-\Phi({\eta},\oo)&=\sum_{\xi\subset\eta} (-1)^{|\eta\sm\xi|}\log\frac{\r_\L(\oo_{\xi})}{\r_\L(\es_\L)}\cr
&=\sum_{\xi_1\subset\eta'}(-1)^{|\eta'\sm\xi_1|}[\log\frac{\r_\L(\oo_{\xi_1}\xx\yy)}{\r_\L(\oo_{\xi_1}\yy)}+\log\frac{\r_\L(\oo_{\xi_1})}{\r_\L(\oo_{\xi_1}\xx)}]\cr
&=\sum_{\xi_1\subset\eta'}(-1)^{|\eta'\sm\xi_1|}[\log\frac{\r_{B_{s-r}(x)}(\oo_{\xi_1}\xx \yy)}{\r_{B_{s-r}(x)}(\oo_{\xi_1}\yy)}-\log\frac{\r_{B_{s-r}(x)}(\oo_{\xi_1}\xx)}{\r_{B_{s-r}(x)}(\oo_{\xi_1})}]
=0
\end{split}
\end{equation*}
as required.
\end{proof}

The key to improve the possibly very poor summability properties of the vacuum potential is to apply a suitable resummation procedure. 
For the lattice such resummations have been used for the first time in \cite{Ko74} to improve convergence. It is interesting to note that resummations could even be used in certain cases of non-Gibbsian lattice systems, namely for the joint measures of quenched random systems.  
Here one obtains at least weakly Gibbsian representations. Having a weakly Gibbsian representation means that the Hamiltonians converge absolutely at least on a measure one set of configurations, but possibly not everywhere, see \cite{Ku01}.
In the continuum such resummations have not been done so far, to our knowledge. We will explain now, how nice they can be done, and how well indeed it works together with the notion of Georgii's hyperedge potential, see \cite{DeDrGe12}, as collected interactions can be naturally indexed with hyperedges when one allows an additional dependence up to a finite horizon.

\begin{proof}[Proof of Theorem~\ref{Representation_Absolute}]

Let us start by considering for every $x\in\R^d$ a co-final sequence $(\D_{x,m})_{m\ge1}$ of finite subsets in $\R^d$ to be specified later. Next, let $\ge$ denote a total ordering on $\R^d$ for which every locally finite subset has a least element. For example think of the cycling order where points are ordered first by their euclidean distance to the origin and then by their angles. Let $\L_x=\{y\ge x\}$ and define $A_{x,m}=\D_{x,m}\cap\L_x$ with $A_{x,0}=\es$ the part of the sequence such that the $x$ is the left endpoint. 
For $\eta\Subset\R^d$ we will write $l(\eta)$ and $r(\eta)$ to denote the left and right end points of $\eta$ in the given ordering. Further we define
$$P_{x,m}=\{\eta\Subset\R^d:\, l(\eta)=x \text{ and } r(\eta)\in(A_{x,m}\sm A_{x,m-1})\}$$
the set of finite subsets of $\R^d$ with left end point equal to $x$ and right end point in the $m$-annulus of the sequence $A_{x,m}$. 
Note that in particular, $\bigcup_{x,m}P_{x,m}=\{\eta:\, \eta\Subset\R^d\}$ is a disjoint partition of the set of finite subsets of $\R^d$. This is a certain grading of the set of finite subsets of $\R^d$. 

Now we perform the regrouping w.r.t.~the unique vacuum potential $\Phi$. For a given $\o\in\O$ and any $x\in\o$ we denote by $P^\o_{x,m}=P_{x,m}\cap\{\eta\Subset\o\}$ the set of subsets of $\o$ in the grading $P_{x,m}$.
Such a $P^\o_{x,m}$ might very well be empty. Note that $P^\o_{x,m}=P^{\tilde\o}_{x,m}$ if $\o_{A_{x,m}}=\tilde\o_{A_{x,m}}$. 
Next, we let $\o_{x,m}=\o\cap (\{x\}\cup (A_{x,m}\sm A_{x,m-1}))$ be the union of finite subsets of $\o$ which have $x$ as their left endpoint and all their other points lying in the $m$-annulus. In these sets we will accumulate the energy contribution of all $\eta\subset\o_{x,m}$.
In case $P^\o_{x,m}=\es$ we do not need such a representative as will become clear in the following definition. 
For $(\eta,\oo)\in\EE$ we define $\Psi(\eta,\oo)=0$ unless $\eta=\o_{x,m}$ for some pair $(x,m)$ in which case we put
\begin{equation*}
\Psi({\o_{x,m}},\oo)=\sum_{\eta\in P^\o_{x,m}}\Phi(\eta,\oo).
\end{equation*}
In words, the energy of vacuum interaction potentials within a class is accumulated in one interaction for each class, see Figure~\ref{PIX_2} for an illustration. The sum can contain configurations $\eta$ with points in $A_{x,m-1}$.
Clearly $\Psi$ is not a vacuum potential.
However, note that we have $\Psi({\o_{x,m}},\oo)=\Psi({\o_{x,m}},\tilde\oo)$ if $\o_{A_{x,m}}=\tilde\o_{A_{x,m}}$ and thus $\Psi$ has the finite-horizon property.
\begin{figure}[!htpb]
\centering
\begin{tikzpicture}[scale=0.85]
\draw[black,dashed] (0,0) circle (5);
\fill[blue] (-2.051,-3.967) circle (2pt);
\fill[blue] (4.473,0.222) circle (2pt);
\fill[blue] (-1.010,-2.549) circle (2pt);
\fill[blue] (-2.333,-3.153) circle (2pt);
\fill[blue] (4.749,0.014) circle (2pt);
\fill[blue] (3.907,-2.984) circle (2pt);
\fill[blue] (-1.607,-1.437) circle (2pt);
\coordinate[label=0: {${x_4}$}] (x_4) at (-1.607,-1.437);
\fill[blue] (-2.116,2.999) circle (2pt);
\fill[blue] (2.714,1.797) circle (2pt);
\fill[blue] (-2.505,0.040) circle (2pt);
\coordinate[label=0: {${x_6}$}] (x_6) at  (-2.505,0.040);
\fill[blue] (-3.808,-1.164) circle (2pt);
\fill[blue] (-3.021,1.309) circle (2pt);
\fill[blue] (3.370,-1.964) circle (2pt);
\fill[blue] (3.217,-0.957) circle (2pt);
\fill[blue] (-4.535,0.206) circle (2pt);
\fill[black!10!white] (0.701,1.588) circle (2.9);
\fill[black!20!white] (0.701,1.588) circle (2.1);
\fill[black!40!white] (0,0) circle (1.735);
\draw[black!10!white] (0.701,1.588) circle (2.9);
\draw[black!20!white] (0.701,1.588) circle (2.1);
\fill[blue] (0.701,1.588) circle (2pt);
\coordinate[label=45: {${x_3}$}] (x_3) at (0.701,1.588);
\draw (0.701,1.588) -- (3.212,0.471);
\draw (0.701,1.588) -- (-2.023,1.411);
\draw (0.701,1.588) -- (0.533,3.854);
\draw[gray] (0.713,1.6) -- (3.224,0.483);
\draw[gray] (3.212,0.471) -- (2.083,2.481);
\draw[gray] (0.701,1.588) -- (2.083,2.481);
\fill[blue] (-2.512,1.277) circle (2pt);
\fill[blue] (2.083,2.481) circle (2pt);
\fill[blue] (-1.396,-2.648) circle (2pt);
\fill[blue] (-2.178,3.174) circle (2pt);
\fill[blue] (-2.023,1.411) circle (2pt);\coordinate[label=45: {${x_5}$}] (x_5) at (-2.023,1.411);
\fill[blue] (-3.189,-2.799) circle (2pt);
\fill[blue] (2.254,-4.095) circle (2pt);
\fill[blue] (-1.366,4.344) circle (2pt);
\fill[blue] (-3.339,-1.138) circle (2pt);
\fill[blue] (2.600,0.874) circle (2pt);
\fill[blue] (1.340,-0.892) circle (2pt);\coordinate[label=0: {${x_2}$}] (x_2) at (1.340,-0.892);
\fill[blue] (3.342,3.375) circle (2pt);
\fill[blue] (-2.541,0.548) circle (2pt);
\fill[blue] (-0.100,0.239) circle (2pt);\coordinate[label=0: {${x_1}$}] (x_1) at (-0.100,0.239);
\fill[blue] (-2.958,3.654) circle (2pt);
\fill[blue] (0.634,3.447) circle (2pt);
\fill[blue] (0.533,3.854) circle (2pt);
\fill[blue] (-3.107,0.423) circle (2pt);
\fill[blue] (-1.741,-1.777) circle (2pt);
\fill[blue] (-3.018,0.222) circle (2pt);
\fill[blue] (-3.613,-1.232) circle (2pt);
\fill[blue] (3.212,0.471) circle (2pt);
\fill[blue] (-1.627,3.913) circle (2pt);
\fill[blue] (2.734,-3.825) circle (2pt);
\fill[blue] (3.898,1.760) circle (2pt);
\fill[blue] (3.127,3.340) circle (2pt);
\fill[black] (0,0) circle (1pt);
\coordinate[label=-45: {${o}$}] (o) at (0,0);
\end{tikzpicture}
\caption{Construction of re-summation of vacuum potentials. Indicated numbering of points in configuration $\o$ is according to cyclic ordering; $\L_{x_3}$ given by the complement of the dark-gray area; two balls $\D_{x_3,m}$ and $\D_{x_3,m'}$ are given by middle-gray and light and middle-gray area around $x_3$ including the parts hidden by dark-gray area; $\o_{x,m'}$ is given by the four points which are connected via black lines; example $\eta\in P^\o_{x,m'}$ is given via points in triangle with gray edges, for this $\eta$, $\D(\eta)=\D_{x_3,m'}$.}
\label{PIX_2}  
\end{figure}
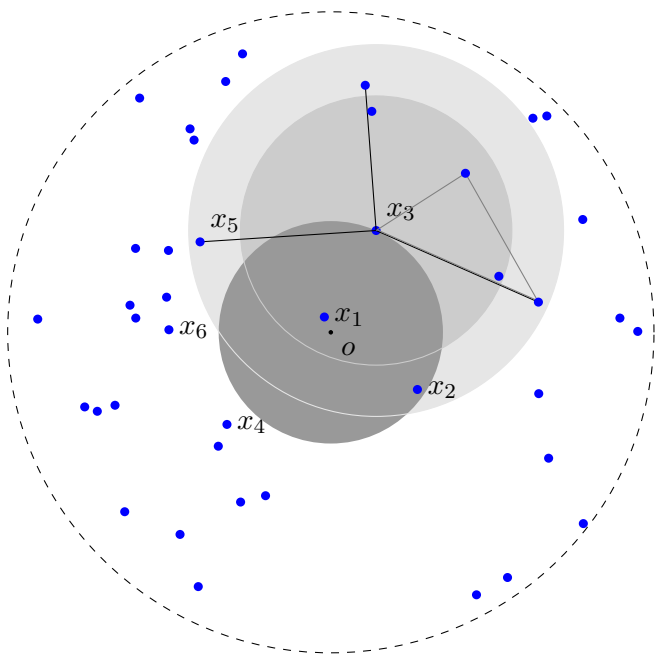

\medskip
What remains to show is that $\Psi$ defines an equivalent Hamiltonian as $\Phi$ and that $\Psi$ is indeed absolutely summable for a good choice of $\D_{x,m}$. W.r.t.~the equivalence, note that 
\begin{equation*}
\begin{split}
\sum_{\eta\Subset\o:\, \eta\cap\L\neq\emptyset}\Psi(\eta,\oo)
&=\sum_{x\in\o_\L}\sum_{m=1}^\infty\Psi(\o_{x,m},\oo)+\sum_{y\in\o:\, y<l(\o_\L)}\sum_{m\in\N:\, \o_{y,m}\cap\L\neq\es}\Psi(\o_{y,m},\oo)\cr
&=\sum_{x\in\o_\L}\sum_{m=1}^\infty\sum_{\eta\in P^\o_{x,m}}\Phi(\eta,\oo)+\sum_{y\in\o:\, y<l(\o_\L)}\sum_{m\in\N:\, \o_{y,m}\cap\L\neq\es}\sum_{\eta\in P^\o_{y,m}}\Phi(\eta,\oo)\cr
&=\sum_{\eta\Subset\o:\, l(\eta)\in\L}\Phi(\eta,\oo)+\sum_{\eta\Subset\o:\, l(\eta)< l(\o_\L), \, \eta\cap\L\neq\es}\Phi(\eta,\oo)+\sum_{\eta\Subset\o:\, \eta\in Q^\o_\L}\Phi(\eta,\oo), 
\end{split}
\end{equation*}
where $Q^\o_\L=\{\eta\Subset\o:\, l(\eta)< l(\o_\L), \,  \eta\cap\L=\es, \text{ there exists }m\in\N \text{ such that } \eta\subset\o_{y,m}\cap\L\neq\es\}$. 
Now the last summand only depends on $\oo_{\L^c}$ and hence $\Psi$ and $\Phi$ are equivalent.

\medskip
W.r.t.~the absolute summability, note that for all $\L\Subset\R^d$ and $\oo\in\O$, 
\begin{equation*}
\begin{split}
\sum_{\eta\Subset\o:\, \eta\cap\L\neq\emptyset}|\Psi(\eta,\oo)|
&=\sum_{x\in\o_\L}\sum_{m=1}^\infty|\Psi(\o_{x,m},\oo)|+\sum_{y\in\o:\, y<l(\o_\L)}\sum_{m\in\N:\, \o_{y,m}\cap\L\neq\es}|\sum_{\eta\in P^\o_{y,m}}\Phi(\eta,\oo)|,
\end{split}
\end{equation*}
where the second summand is finite since all the sums involved are in fact finite. Indeed, the first sum is finite due to the definition of the ordering. The second sum is finite by finiteness of $\L$ and the third sum is finite by the locally finiteness of $\o$ and the assumption that the vacuum Hamiltonian is finite.

In order to prove $\sum_{m=1}^\infty|\Psi(\o_{x,m},\oo)|<\infty$, note that 
by assumption $\Phi$ is uniformly convergent and hence, for every $x\in\R^d$, there exists a co-final sequence $(\D_{x,m})_{m\ge1}$ of balls in $\R^d$ with radius $r_m\in \N$, centered at $x\in\R^d$ such that
\begin{equation*}
\sup_{\oo\in\OO^*}|\sum_{x\in\eta\Subset\o:\, \eta\not\subset{\D_{x,m}}}\Phi({\eta},\oo)|<m^{-2}
\end{equation*}
and in particular, recalling $\L_x=\{y\ge x\}$, we have
\begin{equation*}
\sup_{\oo_{\L_x}\in\OO^*}|\sum_{x\in\eta\Subset\o_{\L_x}:\, \eta\not\subset{A_{x,m}}}\Phi({\eta},\oo)|<m^{-2}.
\end{equation*}
For this choice of $\D_{x,m}$ we have
\begin{equation*}
\begin{split}
\sum_{m=1}^\infty|\Psi(\o_{x,m},\oo)|
&\le |\sum_{\eta\Subset\o:\, l(\eta)=x,\, \eta\subset A_{x,1}}\Phi({\eta},\oo)|+\sum_{m=2}^\infty|\sum_{\eta\Subset\o:\, l(\eta)=x,\, r(\eta)\in A_{x,m}\sm A_{x,m-1} }\Phi({\eta},\oo)|\cr
&\le |\sum_{\eta\Subset\o:\, l(\eta)=x,\, \eta\subset A_{x,1}}\Phi({\eta},\oo)|+2\sum_{m=1}^\infty|\sum_{\eta\Subset\o:\, l(\eta)=x,\, \eta\not\subset A_{x,m}}\Phi({\eta},\oo)|
\end{split}
\end{equation*}
where the first summand consists of only finitely many summands and the second summand is bounded from above by $2\sum_{m\ge1}m^{-2}<\infty$ as required. 

\medskip
Finally note that in order to determine the horizon of $(\eta,\oo)\in\EE$, it suffices to consider the case $\eta=\o_{x,m}$ for some $x\in\R^d$ and $m\ge 1$. But by the definitions, $\Psi(\o_{x,m},\oo)=\Psi(\o_{x,m},\tilde\oo)$ if $\oo_{\D_{x,m}}=\tilde\oo_{\D_{x,m}}$ and hence $\D(\eta,\oo)=\D(\eta)$.
\end{proof}
Let us make a few more comments on the above proof. 
\begin{enumerate}
\item The mapping $\eta\mapsto \D(\eta)$ is measurable on $\mathbb{B}=\{B_{m}(x)| x\in\R^d, m\in\N\}$ with $\s$-algebra $\mathcal{B}(\mathbb{B})=\s(\{B_{n}(x)\in\mathbb{B}| x\in A,n=m\},A\in\mathcal{B}(\R^d),m\in\N)$.
In order to see this, note that the mapping $\eta\mapsto l(\eta)$ is measurable w.r.t.~$\mathcal{B}(\R^d)$ since $\{\eta|\, l(\eta)\in A\}=\{\eta|\, |\eta\cap A|\ge 1\}\cap\{\eta|\, |\eta\cap A'|=0\}$ where $A'=\{x\in\R^d| x<y\text{ for all }y\in A\}$. Further note that we can decompose $\eta\mapsto(l(\eta),\eta)\mapsto \D_{l(\eta)}(\eta)=\D(\eta)$ and 
\begin{equation*}
\begin{split}
\{(l(\eta),\eta)|&\D(\eta)=B_{r_n}(l(\eta))\}=\{(l(\eta),\eta)|\,  |\eta\cap B_{r_n}(l(\eta))^c|=0\}\cap\cr
&\bigcup_{m\in \N}[\{(l(\eta),\eta)|\, |\eta|=m\}\cap \{(l(\eta),\eta)|\, |\eta\cap B_{r_{n-1}}(l(\eta))|\le m-1\}]
\end{split}
\end{equation*}
which shows measurability of the second mapping. Finally, since the mapping $n\mapsto r_n$, $\N\to\N$ is trivially measurable, the result follows. 

\item Instead of balls, the co-final sequence $\D_{x,m}$ can also consist of measurable sets. Also in this case measurability of $\eta\mapsto \D(\eta)$ follows by measurability of $\Phi$.

\item $\Psi$ in general does not have $r$-uniform finite horizons. Indeed, $\D(\o_{x,m})$ has only points in $x$ and in the annulus, but it is composed from vacuum potentials with points in the whole $\D_{x,m}$ and hence it is not sufficient to know the $r$-vicinity of $\o_{x,m}$. 

\item Let us also note, that the proof of Theorem~\ref{Representation_Absolute} is not easily adaptable to give absolutely-summable potentials with finite horizon property in the absence of uniformly convergent vacuum potentials. The reason for this is that the co-final sequence $\D_{x,m}$ (which is designed to give a sufficiently quick exhaustion of $\R^d$ such that summability follows) would depend on $\oo$. But then the finite horizon property can not be guaranteed any more.

\item  As can be seen from the proof, the hypergraph structure for $\Psi$ is given by $\EE^*=\{(\eta,\oo)\in\bar\EE^*:\, \eta=\o_{l(\eta),m}\text{ for some }m\in\N\}$.

\item The absolutely-summable potential $\Psi$ is not uniformly convergent in general. To see this, note that the speed of convergence to $\R^d$ of the co-final sequence $\D_{x,m}$ in general depends on $x$. Hence, due to possible accumulations of points in $\L$, the energy contribution of long-range potentials is not guaranteed to be uniformly small.

\item Finally, note that in general there is no absolute summability with a uniform bound, see for example the Potts gas. 
\end{enumerate}

The ordering of points in $\R^d$ used in the above proof for absolute summability is such that the resulting potential is rotation invariant on the set of configurations where no pair of points can lie on the same sphere. This set has of course probability one under the Poisson point process and is thus negligible and can be excluded from $\OO^*$. The property of rotational invariance is meaningful only in the continuum and not for lattice systems. However, translation invariance of the potential is a meaningful and an often desirable property both for lattice systems and systems of point particles, and the potential constructed above is not translation invariant due to the choice of the ordering. On the other hand, this ordering allows to have only finitely many potentials which come from interactions with points smaller then any given point in the non-translation-invariant ordering. This is convenient for the proof of summability and allowed us to only assume vacuum uniformly log-quasilocality of the Poisson modification. In order to keep the translation invariance provided by the vacuum potential, the ordering must be such that translation invariance is retained, i.e., a group ordering on $\R^d$. But then, for any point in a configuration $\oo$, there are potentially infinitely many smaller points in a configuration $\oo$. To guarantee the absolute summability in this situation, we use the summable-vacuum-uniform quasilocality which ensures sufficiently fast convergence of the vacuum logarithmic Poisson modification on point configurations with a bounded density of points. 
\begin{proof}[Proof of Theorem~\ref{Representation_Absolute_2}]
We use the regrouping approach of \cite[Theorem 3]{Ko74} adapted to the continuous setting. 
Let now denote 
$\L_m=\{x\in\R^d:\, \Vert x\Vert\leq m\}$ with 
$m\in\N$ and $\Vert\cdot\Vert$ an arbitrary norm on $\R^d$. We write $\L_{x,m}=\L_m+x$ for the shift of the box by $x\in\R^d$. Further, let $\le$ be any group ordering on $\R^d$, for example the lexicographical ordering, and define $A^\bullet _{x,m}=\L_{x,m}\cap\{y\bullet x\}$ with $A^\bullet_{x,0}=\es$ where $\bullet\in\{\ge, >,\le,< \}$. 
As before, for $\eta\Subset\R^d$ we will write $l(\eta)$ and $r(\eta)$ to denote the left and right end points of $\eta$ in the given ordering. Again, we define the grading
$$P_{x,m}=\{\eta\Subset\R^d:\, l(\eta)=x \text{ and } r(\eta)\in(A^\ge_{x,m}\sm A^\ge_{x,m-1})\},$$
the set of finite subsets of $\R^d$ with left end point equal to $x$ and right end point in the $m$-annulus of the sequence $A^\ge_{x,m}$. 
Note that in particular, $\bigcup_{x,m}P_{x,m}=\{\eta:\, \eta\Subset\R^d\}$ is a disjoint partition of the set of finite subsets of $\R^d$. 

As before, we perform the regrouping w.r.t.~the unique vacuum potential $\Phi$ as 
\begin{equation*}
\Psi({\o_{x,m}},\oo)=\sum_{\eta\in P^\o_{x,m}}\Phi(\eta,\oo),
\end{equation*}
where 
$P^\o_{x,m}=P_{x,m}\cap\{\eta\Subset\o\}$ 
and $\o_{x,m}=\o\cap (\{x\}\cup (A^\ge_{x,m}\sm A^\ge_{x,m-1}))$ and $\Psi(\eta,\oo)=0$ otherwise, see Figure~\ref{PIX_3} for an illustration.
Again, note that we have $\Psi({\o_{x,m}},\oo)=\Psi({\o_{x,m}},\tilde\oo)$ if $\o_{A^\ge_{x,m}}=\tilde\o_{A^\ge_{x,m}}$ and thus $\Psi$ has the uniform finite-horizon property.
Moreover, note that $\Psi$ is translation invariant, by the translation invariance of $\Phi$ and the fact that $A^\ge_{x,m}$ is translation invariant.

\begin{figure}[!htpb]
\centering
\begin{tikzpicture}[scale=0.85]
\draw[black,dashed] (0,0) circle (5);
\coordinate[label=-180: {${x_{i}}$}] (x_1) at (-0.100,0.239);
\begin{scope}
\clip(-0.100,-5) rectangle (5,5);
\draw [black!20!white] (-0.100,0.239) circle (0.85);
\draw [black!20!white] (-0.100,0.239) circle (1.7);
\draw [black!20!white] (-0.100,0.239) circle (2.55);
\draw [black!20!white] (-0.100,0.239) circle (3.4);
\draw [black!20!white] (-0.100,0.239-3.4) rectangle (-0.100,0.239+3.4);
\end{scope}
\coordinate[label=0: {${x_{i+1}}$}] (x_2) at (0.533,3.854);
\coordinate[label=0: {${x_{i+2}}$}] (x_4) at (0.634,3.447);
\coordinate[label=0: {${x_{i+3}}$}] (x_5) at  (1.340,-0.892);
\coordinate[label=0: {${x_{i+4}}$}] (x_6) at (2.083,2.481);
\coordinate[label=90: {${x_{i+5}}$}] (x_7) at (2.254,-4.095);
\fill[blue] (-0.100,0.239) circle (2pt);
\fill[blue] (-1.010,-2.549) circle (2pt);
\fill[blue] (-1.366,4.344) circle (2pt);
\fill[blue] (-1.396,-2.648) circle (2pt);
\fill[blue] (-1.607,-1.437) circle (2pt);
\fill[blue] (-1.627,3.913) circle (2pt);
\fill[blue] (-1.741,-1.777) circle (2pt);
\fill[blue] (-2.023,1.411) circle (2pt);
\fill[blue] (-2.051,-3.967) circle (2pt);
\fill[blue] (-2.116,2.999) circle (2pt);
\fill[blue] (-2.178,3.174) circle (2pt);
\fill[blue] (-2.333,-3.153) circle (2pt);
\fill[blue] (-2.505,0.040) circle (2pt);
\fill[blue] (-2.512,1.277) circle (2pt);
\fill[blue] (-2.541,0.548) circle (2pt);
\fill[blue] (-2.958,3.654) circle (2pt);
\fill[blue] (-3.018,0.222) circle (2pt);
\fill[blue] (-3.021,1.309) circle (2pt);
\fill[blue] (-3.107,0.423) circle (2pt);
\fill[blue] (-3.189,-2.799) circle (2pt);
\fill[blue] (-3.339,-1.138) circle (2pt);
\fill[blue] (-3.613,-1.232) circle (2pt);
\fill[blue] (-3.808,-1.164) circle (2pt);
\fill[blue] (-4.535,0.206) circle (2pt);
\fill[blue] (0.533,3.854) circle (2pt);
\fill[blue] (0.634,3.447) circle (2pt);
\fill[blue] (1.340,-0.892) circle (2pt);
\fill[blue] (2.083,2.481) circle (2pt);
\draw[thick] (2.083,2.481) -- (-0.100,0.239);
\draw[gray] (2.083,2.481) -- (-0.100,0.239);
\draw[gray] (1.340,-0.892) -- (-0.100,0.239);
\draw[gray] (1.340,-0.892) -- (2.083,2.481);
\draw[thick] (-0.100,0.239) -- (2.714,1.797);
\draw[thick] (-0.100,0.239) -- (2.600,0.874);
\fill[blue] (2.254,-4.095) circle (2pt);
\fill[blue] (2.600,0.874) circle (2pt);
\fill[blue] (2.714,1.797) circle (2pt);
\fill[blue] (3.127,3.340) circle (2pt);
\fill[blue] (3.212,0.471) circle (2pt);
\fill[blue] (3.217,-0.957) circle (2pt);
\fill[blue] (3.342,3.375) circle (2pt);
\fill[blue] (3.370,-1.964) circle (2pt);
\fill[blue] (3.898,1.760) circle (2pt);
\fill[blue] (3.907,-2.984) circle (2pt);
\fill[blue] (4.473,0.222) circle (2pt);
\fill[blue] (4.749,0.014) circle (2pt);
\end{tikzpicture}
\caption{Construction of re-summation of vacuum potentials w.r.t.~the lexicographical ordering. Indicated numbering of points in configuration $\o$ started from the central point with index $i\in \Z$; light-gray lines indicate the sets $A^\ge_{x_i,1},\dots, A^\ge_{x_i,4}$; the solid lines are examples of two-point sets in $P_{x_i,4}$ and the triangle $x_i,x_{i+3},x_{i+4}$ is an example of a three-point set in $P_{x_i,4}$.}
\label{PIX_3}  
\end{figure}
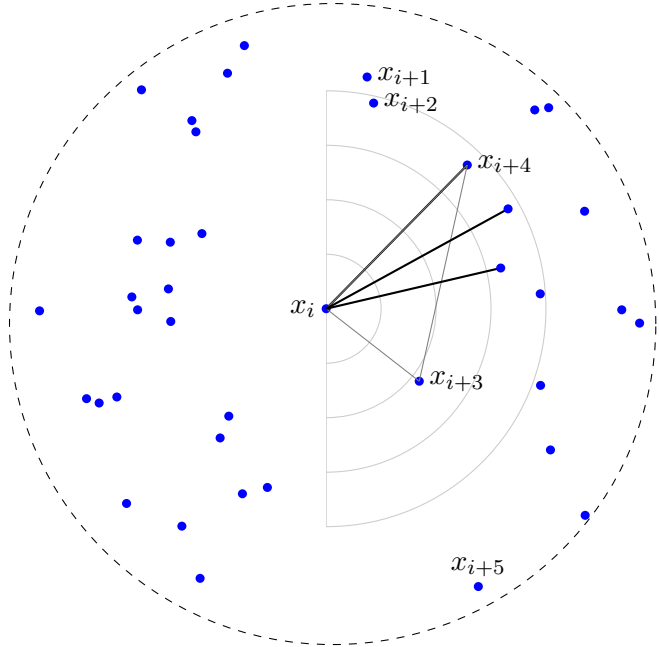

\medskip
What remains to show is that $\Psi$ defines an equivalent Hamiltonian as $\Phi$ and that $\Psi$ is indeed absolutely summable. The equivalence can be shown precisely as in the proof of Theorem~\ref{Representation_Absolute}. 
For the absolute summability, note that for all $\L\Subset\R^d$ and $\oo\in\OO^*$, 
\begin{equation*}
\begin{split}
\sum_{\eta\Subset\o:\, \eta\cap\L\neq\emptyset}|\Psi(\eta,\oo)|
&=\sum_{x\in\o_\L}\sum_{m=1}^\infty|\Psi(\o_{x,m},\oo)|+\sum_{y\in\o:\, y<l(\o_\L)}\sum_{m\in\N:\, \o_{y,m}\cap\, \o_\L\neq\es}|\Psi(\o_{y,m},\oo)|\cr
&\le \sum_{x\in\o_\L}\Big(\sum_{m=1}^\infty|\Psi(\o_{x,m},\oo)|+\sum_{y\in\o:\, y<l(\o_\L)}\sum_{m\in\N:\, \o_{y,m}\ni x}|\Psi(\o_{y,m},\oo)|\Big)\cr
&= \sum_{x\in\o_\L}\Big(\sum_{m=1}^\infty|\Psi(\o_{x,m},\oo)|+\sum_{y\in\o:\, y<l(\o_\L)}|\Psi(\o_{y,m_{y,x}},\oo)|\Big)\cr
&= \sum_{x\in\o_\L}\sum_{m=1}^\infty\Big(|\Psi(\o_{x,m},\oo)|+\sum_{y\in\o:\, m_{y,x}=m}|\Psi(\o_{y,m},\oo)|\Big)\cr
\end{split}
\end{equation*}
where $m_{y,x}$ denotes the integer $m$ such that $x\in A^\ge_{y,m}\sm A^\ge_{y,m-1}$. The integer $m_{y,x}$, by translation invariance, only depends on the relative position of $x$ and $y$. Hence, $m_{y,x}$ is equivalently given also by the $m$ such that $y\in A^<_{x,m}\sm A^<_{x,m-1}$. Thus, 
\begin{equation*}
\begin{split}
\sum_{\eta\Subset\o:\, \eta\cap\L\neq\emptyset}|\Psi(\eta,\oo)|
&\le \sum_{x\in\o_\L}\sum_{m=1}^\infty \Big(|\Psi(\o_{x,m},\oo)|+\sum_{y\in \o_{A^<_{x,m}\sm A^<_{x,m-1}}}|\Psi(\o_{y,m},\oo)|\Big).
\end{split}
\end{equation*}
Further note that we can rewrite $\Psi$ as
\begin{equation*}
\begin{split}
\Psi(\o_{y,m},\oo)
&=\sum_{\eta\subset\o:\, l(\eta)=y,\, r(\eta)\in A^\ge _{y,m}\sm A^\ge_{y,m-1} }\Phi({\eta},\oo)\cr
&=\sum_{\eta\subset\o:\, l(\eta)=y,\, r(\eta)\in A^\ge _{y,m}}\Phi({\eta},\oo)-\sum_{\eta\subset\o:\, l(\eta)=y,\, r(\eta)\in A^\ge_{y,m-1} }\Phi(\eta,\oo)\cr
&=\sum_{\eta\subset\o:\, l(\eta)=y,\, \eta\subset A^\ge _{y,m}}\Phi({\eta},\oo)-\sum_{\eta\subset\o:\, l(\eta)=y,\, \eta\subset A^\ge_{y,m-1} }\Phi({\eta},\oo)\cr
&=\sum_{\eta\subset\o_{A^\ge_{y,m}}:\, l(\eta)=y}\Phi({\eta},\oo)-\sum_{\eta\subset\o_{A^\ge_{y,m-1}}:\, l(\eta)=y}\Phi({\eta},\oo)
\end{split}
\end{equation*}
where 
\begin{equation*}
\begin{split}
\sum_{\eta\subset\o_{A^\ge_{y,m}}:\, l(\eta)=y}\Phi({\eta},\oo)=\sum_{\eta\subset\o_{A^\ge_{y,m}}}\Phi({\eta},\oo)-\sum_{\eta\subset\o_{A^>_{y,m}}}\Phi({\eta},\oo).
\end{split}
\end{equation*}
Now, for the vacuum potential $\Phi$, 
see~\eqref{VacPot}, we have 
\begin{equation*}
\begin{split}
\sum_{\eta\subset\o_{A^\ge_{y,m}}}\Phi({\eta},\oo)=-\log\frac{\r_{A^\ge_{y,m}}(\oo_{A^\ge_{y,m}})}{\r_{A^\ge_{y,m}}(\es_{A^\ge_{y,m}})}
\end{split}
\end{equation*}
and hence by the pre-modification property of $\r$, we have
\begin{equation*}
\begin{split}
\sum_{\eta\subset\o_{A^\ge_{y,m}}:\, l(\eta)=y}\Phi({\eta},\oo)=-\log\frac{\r_y(\oo_{A^>_{y,m}})}{\r_y(\oo_{A^\ge_{y,m}})}.
\end{split}
\end{equation*}
Thus, by translation invariance and the assumption of summable-vacuum-uniform log-quasilocality, for $m\ge 1$, 
\begin{equation*}
\begin{split}
|\Psi(\o_{y,m},\oo)|=|\log\frac{\r_y(\oo_{A^>_{y,m}})}{\r_y(\oo_{A^\ge_{y,m}})}-\log\frac{\r_y(\oo_{A^>_{y,m-1}})}{\r_y(\oo_{A^\ge_{y,m-1}})}|\le 2\k(m-1)
\end{split}
\end{equation*}
and hence
\begin{equation*}
\begin{split}
\sum_{\eta\Subset\o:\, \eta\cap\L\neq\emptyset}|\Psi(\eta,\oo)|
&\le 2\sum_{x\in\o_\L}\sum_{m=1}^\infty\big(\k(m-1)+|\o_{A^<_{x,m}\sm A^<_{x,m-1}}| \k(m-1)\big).
\end{split}
\end{equation*}
The first summand on the right-hand side is finite, since $|\o_\L|<\infty$ and $\sum_{m=1}^\infty\k(m-1)<\infty$. For the second summand, recall that, by assumption we derive a representation only for $\oo$ satisfying the density constraint $\sup_{m\in\N}m^{-d}|\o_{\L_m}|<C(\o)<\infty$. This allows us to further bound
\begin{equation*}
\begin{split}
\sum_{x\in\o_\L}\sum_{m=1}^\infty|\o_{A^<_{x,m}\sm A^<_{x,m-1}}|\k(m-1)\le |\o_\L|C(\o)\sum_{m=1}^\infty m^d\k(m-1)<\infty,
\end{split}
\end{equation*}
which completes the proof.
\end{proof}
Let us remark that different choices for the norm $\Vert\cdot\Vert$, used in the definition of the annuli-defining balls $\L_n$ in the above proof, yield different additional symmetries. For example, for the euclidean norm, the construction implies the preservation of symmetries
of the pre-modification $\rho$ to the associated translation-invariant potential $\Psi$ w.r.t.~ 
all euclidean symmetries which keep the positive $e_1$-axis invariant. In particular, all rotations around the $e_1$-axis can be preserved. Note also that the $e_1$-direction can be replaced by an arbitrary other direction in euclidean space.

\begin{proof}[Proof of Lemma~\ref{VacPotNonZero}]
Assume that $\eta$ consists of two clusters $\eta_1,\eta_2\neq\es$, then
we have 
\begin{equation*}
\begin{split}
-\Phi({\eta},\oo)
&=\sum_{\xi_1\subset \eta_1}(-1)^{|\eta_1\sm\xi_1|}\sum_{\xi_2\subset \eta_2} (-1)^{|\eta_2\sm\xi_2|} \sum_{C\in\CC(\xi_1\cup\xi_2)}\Psi(C,\oo)\cr
&=\sum_{\xi_1\subset \eta_1}(-1)^{|\eta_1\sm\xi_1|}\sum_{\xi_2\subset \eta_2} (-1)^{|\eta_2\sm\xi_2|}(\Psi(C,\oo_{\xi_1})+\Psi(C,\oo_{\xi_2}))\cr
&=\sum_{\xi_1\subset \eta_1}(-1)^{|\eta_1\sm\xi_1|}\sum_{\xi_2\subset \eta_2} (-1)^{|\eta_2\sm\xi_2|}\Psi(C,\oo_{\xi_2})=0.
\end{split}
\end{equation*}
\end{proof}

\begin{proof}[Proof of Lemma~\ref{AsymptoticPot}]
Let us start by estimating $\phi$ using additional cross terms. 
\begin{equation*}
\begin{split}
|\phi(n)|
&=|\sum_{j=1,3,\dots}^{\infty}\big(\frac{1}{j}(1-a^{j})^{n}- \frac{1}{j+1}(1-a^{j+1})^{n}\big)|\cr
&\le|\sum_{j=1,3,\dots}^{\infty}\frac{1}{j}\big((1-a^{j})^{n}-(1-a^{j+1})^{n})\big)|+| \sum_{j=1,3,\dots}^{\infty}  (\frac{1}{j}- \frac{1}{j+1})(1-a^{j+1})^{n}|\cr
&\le\sum_{j\ge 1}^{\infty}\frac{1}{j}\big((1-a^{j+1})^{n}-(1-a^{j})^{n})\big)+ \sum_{j\ge 1}^{\infty}\frac{1}{j(j+1)}(1-a^{j+1})^{n}\cr
&\le (1-a)^n+2\sum_{j\ge 1}^{\infty}\frac{1}{j(j+1)}(1-a^{j+1})^{n}
\end{split}
\end{equation*}
The first term decays exponentially. In order to determine the asymptotic behaviour of the second term, let us split the sum into terms $j\geq J$ and $j<J$, 
with $J=J(n)$ tending to infinity with $n$, in a way chosen below. 
We obtain the upper bound 
\begin{equation*}
\begin{split}
\sum_{j\geq J} \frac{1}{j(j+1)}(1-a^{j+1})^{n}+ \sum_{j<J}\frac{1}{j(j+1)}(1-a^{j+1})^{n}&\leq \frac{1}{J}+ (1-a^{J})^{n}
\end{split}
\end{equation*}
where we used twice that $\sum_{j\geq J} (j(j+1))^{-1}=J^{-1}$. 
As a final step, we optimize over $J$ given $n$ in such a way that the expression
$$(1-a^{J})^{n}
=\exp\big(-n(a^J+o(a^J))\big)$$ 
tends to zero as $n$ tends to infinity. In order to achieve this, take $na^J=n^\e$ for arbitrary $\e>0$. Then $J(n)=((\e-1)/\log a)\log n$ which gives the desired speed of convergence with $C>2\log(1/a)$. 
\end{proof}

Convenient choices are for instants the $\Vert\cdot\Vert_\infty$-norm, or, on the other hand, the euclidean norm, which gives cylindric symmetry for the re-summed potentials, to be constructed as follows.

\begin{proof}[Proof of Proposition~\ref{AbsSumWRM}]
It suffices to consider 
\begin{equation*}
\begin{split}
&\sup_{\oo\in\OO}|\log h_\L(\oo)-\log h_\L(\oo_\D)|\cr&=\sup_{\oo\in\OO}|\sum_{C\in\CC^{\rm f}_\L(\o)}\log(1+a^{|\oo_C|^+}b^{|\oo_C|^-})-\sum_{C\in\CC^{\rm f}_\L(\o_\D)}\log(1+a^{|\oo_C|^+}b^{|\oo_C|^-})|
\end{split}
\end{equation*}
with $\D\supset\L$. Let $d(\L,\D^c)$ denotes the set distance between $\D^c$ and $\L$. Note that, in order for a cluster in $\CC^{\rm f}_\L(\o)$ not to be also contained in $\CC^{\rm f}_\L(\o_\D)$ it must at least have $d(\L,\D^c)/(2r)$ many points since otherwise is would be contained in $\D$. Conversely, every cluster in $\CC^{\rm f}_\L(\o_\D)$ is part of a cluster in $\CC^{\rm f}_\L(\o)\cup\CC^{\infty}_\L(\o)$. If the cluster would be contained in $\D$, then both contributions cancel. Hence, non canceling clusters in $\CC^{\rm f}_\L(\o_\D)$ must also have at least $d(\L,\D^c)/(2r)$ many points. Moreover, there can only be $K=K(\L, r)$ different clusters attached to $\L$. Hence with $c=a\vee b$ we have 
\begin{equation*}
\begin{split}
&\sup_{\oo\in\OO}|\log h_\L(\oo)-\log h_\L(\oo_\D)|\le 2K\log(1+c^{d(\L,\D^c)/(2r)})
\end{split}
\end{equation*}
which tends to zero as $\D$ tends to $\R^d$ since in the Gibbsian regime $t>t_G$ we have $a,b<1$.
\end{proof}

\begin{proof}[Proof of Proposition~\ref{AbsSumTransWRM}]
As can be seen from the last estimate in the proof of Proposition~\ref{AbsSumWRM}, there exists $K>0$ and $0<s<1$ such that
\begin{equation*}
\begin{split}
&\sup_{\oo\in\OO}|\log h_0(\oo)-\log h_0(\oo_{\L_n})|\le 2K\log(1+s^n),
\end{split}
\end{equation*}
but $\sum_{n\ge 1}n^d\log(1+s^n)<\infty$, which can be seen for example via the integral test for convergence.
\end{proof}

\bibliography{Jahnel}
\bibliographystyle{alpha}

\end{document}